\newtheorem{theorem}{Theorem}
\newtheorem{lemma}{Lemma}
\newtheorem{proposition}{Proposition}
\theoremstyle{definition}
\newtheorem{definition}{Definition}
\newtheorem{remark}{Remark}
\newcommand{\dfrt}{T_{dpr}}
\newcommand{\cfdvs}{\mathbf{VectFin}}
\newcommand{\mys}{\operatorname{ss}}
\newcommand{\myt}{\operatorname{st}}
\title{$R$-matrix knot invariants and triangulations}
\author{R. M.  Kashaev}
\thanks{The work is supported in part by the Swiss National Science
  Foundation.}
\address{Universit\'e de Gen\`eve,
Section de math\'ematiques,
2-4, rue du Li\`evre,
CP 64,
1211 Gen\`eve 4, Suisse
}
\date{February 9, 2010}
\email{rinat.kashaev@unige.ch}
\begin{document}
\begin{abstract}
The construction of quantum knot invariants from solutions of the Yang--Baxter equation ($R$-matrices) is reviewed with the emphasis on a class of $R$-matrices admitting an interpretation in intrinsically three-dimensional terms.
\end{abstract}
\maketitle
\section*{Introduction}

The Yang--Baxter equation \cite{Yang,Baxter} is the central tool for studying exactly solvable models of quantum field theory and statistical mechanics \cite{Baxter,Faddeev} and in the theory of quantum groups \cite{Drinfeld}. Its significance in knot theory and topology of three-manifolds comes from the fact that its solutions, called $R$-matrices, are the principal ingredients in the constructions of quantum knot invariants \cite{Jones,Turaev1}.

In this paper, we review the construction of quantum knot invariants by using $R$-matrices, putting the emphasis  on a particular class of $R$-matrices  which admit an interpretation in intrinsically three-dimensional terms. We show that for any finite dimensional Hopf algebra with invertible antipode, there exists a canonical $R$-matrix such that the associated knot invariant can be given an interpretation in terms of the combinatorics of ideal triangulations of knot complements. On the algebraic level, our construction corresponds to a canonical algebra homomorphism of the Drinfeld double of a Hopf algebra into the tensor product of the Heisenberg doubles of the same Hopf algebra and its dual Hopf algebra \cite{kash1}.

The organization of the paper is as follows. In Section~\ref{Sec:1}, we review the construction of knot invariants by using $R$-matrices. From one hand side, following the modern tendency, instead of finite dimensional vector spaces, we work in a more general framework of monoidal categories. On the other hand side, unlike the common practice of starting from a quasi-triangular Hopf algebra or (and) a braided monoidal category of its representations, we start from a particular $R$-matrix without precising its algebraic origin. In order to be able
to construct a knot invariant, we need only one extra condition on the $R$-matrix which we call rigidity. In the case of an $R$-matrix acting in a finite dimensional vector space, the rigidity is nothing else but the invertibility of the partially transposed matrix. Such approach first was suggested by N.~Reshetikhin in \cite{Reshetikhin}. In our mind, it is a direct and economical way of constructing a knot invariant from a given $R$-matrix, without preliminary study of the underlying quasi-triangular Hopf algebra and its representation theory. This approach is especially useful in the case of $R$-matrices whose algebraic nature is not known or poorly studied, for example, some of the two-dimensional $R$-matrices in the classification list of J. Hietarinta  \cite{Hietarinta}. In Section~\ref{Sec:2}, we introduce the notion of a rigid $T$-matrix and describe a canonical construction of a rigid $R$-matrix from a rigid $T$-matrix. Together with the result of the previous section, we are able to associate a knot invariant to any rigid $T$-matrix. Finally, in Section~\ref{Sec:3}, we associate to any tangle diagram a $3$-dimensional $\Delta$-complex in the sense of A.~Hatcher \cite{Hatcher}, and interpret the knot invariants associated with finite dimensional Hopf algebras in terms of state sums over $3$-dimensional $\Delta$-complexes.
\subsection*{Acknowledgements}
The author would like to thank N. Reshetikhin and A. Virelizier for interesting discussions.
\section{$R$-matrices and knot invariants}\label{Sec:1}
\subsection{Notation}
In this paper, the term monoidal category means strict monoidal category.
For notational simplification, we shall suppress the tensor product symbol but always keep the usual composition symbol $\circ$, and  to avoid writing many parentheses,
we shall assume that the tensor product takes precedence over the usual composition of morphisms. We shall also write $X$ instead of $\operatorname{id}_X$, provided the context permits to avoid confusion. The price paid for this notation is that powers of an endomorphism $f\colon X\to X$ are now preceded by the composition symbol:
\[
f^{\circ k}=\underbrace{f\circ f\circ\cdots \circ f}_{k\ \mathrm{times}},\quad k\in\mathbb{Z}_{\ge 0}.
\]
\subsection{Dual objects in monoidal categories}
In a monoidal category, a \emph{duality} is a quadruple
 $\langle X,Y,\eta,\epsilon\rangle$ consisting of two objects $X$, $Y$ and two morphisms
 \[
 \eta\colon \mathbb{I}\to YX,\quad
 \epsilon\colon XY\to \mathbb{I}
 \] such that
\begin{equation}\label{E:first}
\epsilon X\circ X\eta=X
,\quad Y\epsilon\circ\eta Y=Y.
\end{equation}
In this situation,  $X$ is called \emph{left dual} of $Y$, and $Y$ is called \emph{right dual} of $X$. An object $X$ is called \emph{rigid} if it is a right and a left dual. A rigid object is called \emph{strongly rigid} (or \emph{s-rigid}) if its left dual is isomorphic to its right dual.

For two dualities $p=\langle X,Y,\eta,\epsilon\rangle$, $q=\langle U,V,\eta',\epsilon'\rangle$, we define their product
\[
pq=\langle XU,VY,V\eta U\circ\eta',\epsilon\circ X\epsilon' Y\rangle.
\]

For a given duality $p=\langle X,Y,\eta,\epsilon\rangle$ and any objects $U$ and $V$, there are canonical bijections
\[
a_p\colon \operatorname{Hom}(XU,V)\to \operatorname{Hom}(U,YV),\quad
b_p\colon \operatorname{Hom}(UY,V)\to \operatorname{Hom}(U,VX)
\]
defined by the formulae
\[
a_p(f)=Yf\circ\eta U,\quad b_p(g)=gX\circ U\eta,\quad f\in\operatorname{Hom}(XU,V),\ g\in\operatorname{Hom}(UY,V),
\]
their inverses being
\[
a_p^{-1}(f)=\epsilon V\circ Xf,\quad b_p^{-1}(g)=V\epsilon\circ g Y,\quad f\in\operatorname{Hom}(U,YV),\ g\in \operatorname{Hom}(U,VX).
\]
 We shall use particular compositions of these bijections
\[
l_p=a_p\circ b_p^{-1}\colon \operatorname{Hom}(XU,VX)\to \operatorname{Hom}(UY,YV)
\]
and
\[
r_p=l_p^{-1}=b_p\circ a_p^{-1}\colon \operatorname{Hom}(UY,YV)\to \operatorname{Hom}(XU,VX).
\]
\begin{definition}
A morphism $f\colon XU\to VX$ (respectively $f\colon UX\to XV$) of a monoidal category is called $X$-\emph{left} (respectively $X$-\emph{right}) \emph{isomorphism}, if $X$ is a left (respectively right) dual, and for a duality $p=\langle X, Y,\eta,\epsilon\rangle$ (respectively
\(
 p=\langle Y, X,\eta,\epsilon\rangle
 \))
 the morphism $l_p(f)$ (respectively $r_p(f)$)
is an isomorphism.
\end{definition}
\begin{lemma}\label{Lem:iff}
Let $X$ be a left dual and $Y$ a right dual objects of a monoidal category. Then, a morphism $f\colon XY\to YX$ is $X$-left isomorphism if and only if it is $Y$-right isomorphism.
\end{lemma}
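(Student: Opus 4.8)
The plan is to realize the $Y$-right transpose $r_q(f)$ as the categorical dual (mate) of the $X$-left transpose $l_p(f)$, and then to invoke the elementary fact that a morphism is invertible if and only if its mate is.

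First I would fix dualities realizing the two hypotheses. Since $X$ is a left dual, choose a duality $p=\langle X,X^\vee,\eta,\epsilon\rangle$; since $Y$ is a right dual, choose $q=\langle{}^\vee Y,Y,\eta',\epsilon'\rangle$. Unwinding the definitions, $f\colon XY\to YX$ is an $X$-left isomorphism exactly when
\[
g:=l_p(f)\colon YX^\vee\to X^\vee Y
\]
is an isomorphism, and a $Y$-right isomorphism exactly when
\[
h:=r_q(f)\colon {}^\vee Y\, X\to X\,{}^\vee Y
\]
is an isomorphism. Before proceeding I would note that neither condition depends on the chosen duality: two dualities with the same left (resp.\ right) dual differ by a canonical isomorphism, under which $l_p$ (resp.\ $r_q$) changes by pre- and post-composition with isomorphisms, which does not affect invertibility. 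Hence it suffices to compare $g$ and $h$ for the fixed $p,q$.

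The decisive observation is that the source and target of $h$ are precisely the left duals of the target and source of $g$: indeed ${}^\vee(X^\vee Y)={}^\vee Y\,X$ and ${}^\vee(YX^\vee)=X\,{}^\vee Y$, the required compound evaluations and coevaluations being exactly those furnished by the product dualities $pq=\langle X\,{}^\vee Y,\,YX^\vee,\dots\rangle$ and $qp=\langle {}^\vee Y\,X,\,X^\vee Y,\dots\rangle$. I would then prove the identity $h=g^\vee$, where $g^\vee$ denotes the left mate of $g$ formed from these compound dualities. This is a direct diagram chase: one expands $g=a_p\circ b_p^{-1}(f)$ and $h=b_q\circ a_q^{-1}(f)$ through the formulae for $a_p,b_p,a_q,b_q$ in terms of $\eta,\epsilon,\eta',\epsilon'$, and repeatedly applies the triangle identities \eqref{E:first} for $p$ and $q$ to cancel the adjacent $\eta\epsilon$ and $\eta'\epsilon'$ pairs until the two sides coincide. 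Geometrically, $g$ is $f$ with its $X$-strand bent around; taking the mate of $g$ bends the remaining two strands, which straightens the $X$-strand back and instead bends the $Y$-strand, yielding precisely $r_q(f)$. This computation is the main obstacle: routine in principle but bookkeeping-heavy, and care is needed to keep the left/right conventions and the identification ${}^\vee(X^\vee)=X$ consistent.

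Finally, from $h=g^\vee$ the equivalence is immediate. Taking left duals of morphisms is contravariantly functorial, $(\psi\circ\phi)^\vee=\phi^\vee\circ\psi^\vee$ and $(\operatorname{id})^\vee=\operatorname{id}$, so if $g$ is an isomorphism then $(g^{-1})^\vee$ is a two-sided inverse of $g^\vee=h$; conversely, since the right mate inverts the left mate up to the canonical double-dual isomorphism, invertibility of $h$ forces that of $g$. Thus $g$ is an isomorphism if and only if $h$ is, that is, $f$ is an $X$-left isomorphism if and only if it is a $Y$-right isomorphism.
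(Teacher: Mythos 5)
Your proposal is correct and takes essentially the same route as the paper: the paper's proof also fixes dualities $p=\langle X,U,\eta,\epsilon\rangle$, $q=\langle V,Y,\eta',\epsilon'\rangle$ and exhibits the inverse of $r_q(f)$ as $r_{pq}\bigl((l_p(f))^{\circ-1}\bigr)$, the transpose of the inverse through the product dualities, which is precisely your mate $\bigl(l_p(f)^{\circ-1}\bigr)^\vee$. Your packaging via the identity $r_q(f)=(l_p(f))^\vee$ and contravariant functoriality of mates is just a structured restatement of the same triangle-identity cancellation that the paper leaves implicit.
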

\begin{proof} Choose dualities
\(
p=\langle X,U,\eta,\epsilon\rangle
\) and
\(
 q=\langle V, Y,\eta',\epsilon'\rangle.
 \)
 If $f\colon XY\to YX$ is a $X$-left isomorphism, i.e. $l_p(f)$ is invertible, then $r_{pq}((l_p(f))^{\circ-1})$ is inverse of $r_q(f)$, i.e. $f$ is a $Y$-right isomorphism. Similarly, if $f$ is a $Y$-right isomorphism, i.e. $r_q(f)$ is invertible, then
 $l_{qp}((r_q(f))^{\circ-1})$ is inverse of $l_p(f)$, i.e. $f$ is a $X$-left isomorphism.
\end{proof}
\subsection{Rigid $R$-matrices}
\begin{definition}
An \emph{ $R$-matrix} is a triple $(C,X,\rho)$ given by a monoidal category $C$, an object $X$ in $C$, and an isomorphism  $\rho\colon X^{2}\to X^{2}$ satisfying the \emph{Yang--Baxter equation}
\[
\rho X\circ X\rho\circ\rho X=X\rho\circ\rho X\circ X\rho.
\]
\end{definition}
The morphism $\rho$ itself will also be informally called $R$-matrix, if the underlying triple is clear from the context.
\begin{lemma}\label{Rem:left-right-rigid}
 Let $(C,X,\rho)$ be an $R$-matrix. If $\rho^{\circ\pm 1}$ are $X$-left or $X$-right isomorphisms, then $X$ is a s-rigid object.
 \end{lemma}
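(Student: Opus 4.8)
The plan is to establish strong rigidity by producing the missing dual from the invertible partial transpose, exactly the quantity that the introduction identifies with rigidity. I would treat the case where $\rho$ and $\rho^{\circ-1}$ are both $X$-left isomorphisms; the $X$-right case is mirror-symmetric, and one expects Lemma~\ref{Lem:iff} to mediate between the two handednesses when the hypotheses are mixed, since it relates the two notions precisely for morphisms of the form $XY\to YX$. Being an $X$-left isomorphism presupposes that $X$ is a left dual, so I fix once and for all a duality $p=\langle X,Y,\eta,\epsilon\rangle$; thus $Y$ is a right dual of $X$, and it remains only to exhibit a left dual of $X$ isomorphic to $Y$. Since strong rigidity asks merely that the left dual be isomorphic to the right dual, it suffices to show that the \emph{same} object $Y$ can be made into a left dual of $X$, i.e. to construct morphisms $\tilde\eta\colon\mathbb{I}\to XY$ and $\tilde\epsilon\colon YX\to\mathbb{I}$ making $\langle Y,X,\tilde\eta,\tilde\epsilon\rangle$ a duality.

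The hypothesis supplies exactly the tool: the two partial transposes
\[
\phi_{+}=l_p(\rho)\colon XY\to YX,\qquad \phi_{-}=l_p(\rho^{\circ-1})\colon XY\to YX
\]
are isomorphisms. I would use them to twist the given unit and counit into the reversed ones, the natural choice being
\[
\tilde\eta=\phi_{-}^{\circ-1}\circ\eta\colon \mathbb{I}\to XY,\qquad \tilde\epsilon=\epsilon\circ\phi_{+}^{\circ-1}\colon YX\to\mathbb{I},
\]
so that one rotational sense of the forthcoming zig-zags is governed by the positive crossing $\rho$ and the opposite sense by the negative crossing $\rho^{\circ-1}$; this is why both of $\rho^{\circ\pm1}$ are required in the hypothesis.

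The heart of the argument is then to verify the two duality identities
\[
\tilde\epsilon Y\circ Y\tilde\eta=Y,\qquad X\tilde\epsilon\circ\tilde\eta X=X.
\]
To do this I would expand $\phi_{\pm}$ through their definition as $l_p$ applied to $\rho^{\circ\pm1}$, substitute into the two composites, and repeatedly apply the snake identities \eqref{E:first} for $\langle X,Y,\eta,\epsilon\rangle$ together with the naturality of the bijections $a_p,b_p$. After these reductions each identity should collapse to a single instance of the Yang--Baxter equation for $\rho$, combined with $\rho\circ\rho^{\circ-1}=X^{2}=\rho^{\circ-1}\circ\rho$. I expect this diagrammatic computation to be the main obstacle: because $\phi_{+}$ and $\phi_{-}$ are genuinely distinct isomorphisms (not inverse to one another), the naive sandwich $\epsilon Y\circ(\phi_{+}^{\circ-1}Y)\circ(Y\phi_{-}^{\circ-1})\circ Y\eta$ is not yet a standard zig-zag, and it is precisely the Yang--Baxter equation that lets the mismatched strands slide past each other and cancel into the identity.

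Once both identities hold, $\langle Y,X,\tilde\eta,\tilde\epsilon\rangle$ is a duality, so $Y$ is also a left dual of $X$. Together with the original duality $p$ this shows that $X$ is simultaneously a left and a right dual, hence rigid, and its left dual $Y$ coincides with, in particular is isomorphic to, its right dual $Y$. Therefore $X$ is s-rigid, as claimed.
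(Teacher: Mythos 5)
Your construction coincides exactly with the paper's proof: the paper fixes the same duality $p=\langle X,Y,\eta,\epsilon\rangle$, sets $\rho_\pm=l_p(\rho^{\circ\pm1})$, defines $\bar\eta=\rho_-^{\circ-1}\circ\eta$ and $\bar\epsilon=\epsilon\circ\rho_+^{\circ-1}$ (your $\tilde\eta$, $\tilde\epsilon$), asserts the two duality identities making $\langle Y,X,\bar\eta,\bar\epsilon\rangle$ a duality, and concludes s-rigidity with the $X$-right case handled ``similarly,'' just as you do. The paper, like you, does not write out the verification of the snake identities, so your proposal is at the same level of detail as the published argument, and your diagnosis of the mechanism is accurate: those identities are not a formal consequence of invertibility of $\rho$ and of $l_p(\rho^{\circ\pm1})$ alone, but genuinely require the Yang--Baxter equation.
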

 \begin{proof}
 Assume that $\rho^{\circ\pm 1}$ are $X$-left isomorphisms. Choosing a duality $p=\langle X,Y,\eta,\epsilon\rangle$, the morphisms
\[
\bar\eta=\rho_-^{\circ-1}\circ\eta\colon \mathbb{I}\to XY,\quad \bar\epsilon=\epsilon\circ\rho_+^{\circ-1}\colon YX\to \mathbb{I},\quad \rho_\pm=l_p(\rho^{\circ\pm1}),
\]
satisfy the identities
\[
\bar\epsilon Y\circ Y\bar\eta=Y,\quad X\bar\epsilon\circ\bar\eta X=X
\]
which mean that we have another duality
\[
\bar p=\langle Y,X,\bar\eta,\bar\epsilon\rangle,
\]
i.e. $X$ is a right dual of $Y$, and thus $X$ is s-rigid.
The case when $\rho^{\circ\pm 1}$ are $X$-right isomorphisms is similar.
\end{proof}
\begin{definition}
An $R$-matrix $(C,X,\rho)$ is called \emph{rigid} if $\rho^{\circ\pm1}$ are $X$-left isomorphisms (and by Lemmas~\ref{Lem:iff} and \ref{Rem:left-right-rigid} they are also $X$-right isomorphisms).
\end{definition}
\begin{remark}
 The rigidity of an $R$-matrix $(C,X,\rho)$, where $C$ is the category of finite dimensional vector spaces, is equivalent to its non-degenerateness in the sense of \cite{Reshetikhin}.
\end{remark}

\subsection{The category of directed planar ribbon tangles}
A \emph{directed tangle} is a smooth embedding $f\colon (M,\partial M)\to (\mathbb{R}^2\times [0,1],\mathbb{R}^2\times \partial[0,1])$ where $M$ is a compact oriented one-dimensional manifold.
A \emph{directed tangle diagram} is the image of a directed tangle under the projection
$\mathbb{R}^2\times [0,1]\to \mathbb{R}\times [0,1]$, $(x,y,t)\mapsto(x,t)$, provided the tangle is in general position with respect to the projection, and with the additional information of over- and under-crossings as with knot diagrams. A \emph{connected component} of a directed tangle diagram is the image of a connected component of the corresponding  directed tangle. A \emph{directed planar ribbon tangle}, to be referred later as \emph{dpr-tangle}, is an equivalence class of directed tangle diagrams with respect to the equivalence relation generated by isotopies of the strip $\mathbb{R}\times [0,1]$ and the oriented Reidemeister moves of types II and III. In other words, a dpr-tangle is a directed tangle diagram considered up to regular isotopies \cite{Kauffman}.

The set of dpr-tangles forms the set of morphisms of a  monoidal category $\dfrt$ defined in a similar way  as for ordinary directed tangles \cite{Turaev}. The set of objects $\operatorname{Ob}\dfrt$ are finite sequences (including the empty one) of pluses and minuses.
For a dpr-tangle $x$, the elements of the sequence $\operatorname{dom}(x)$  (respectively $\operatorname{cod}(x)$) are in bijective correspondence with the points of the intersection set $x\cap(\mathbb{R}\times\{0\})$ (respectively $x\cap(\mathbb{R}\times\{1\})$) with the order induced from that of $\mathbb{R}$, and for each intersection point the associated sign is plus, if the point is the image of the initial (respectively terminal) point of the corresponding oriented segment
of the directed tangle, and minus otherwise. For two composable dpr-tangles $x$ and $y$ (i.e. satisfying the condition $\operatorname{cod}(x)=\operatorname{dom}(y)$) their composition is obtained by choosing representatives of $x$ and $y$ so that the projection images in the real line of the intersections $x\cap(\mathbb{R}\times\{1\})$ and $y\cap(\mathbb{R}\times\{0\})$ coincide, and then by identifying
the boundary component  $\mathbb{R}\times\{1\}$  of the strip containing $x$ with the boundary component  $\mathbb{R}\times\{0\}$ of the strip containing $y$, thus forming a new strip containing a representative of the composed dpr-tangle $y\circ x$. The tensor product $xy$ of two dpr-tangles $x$ and $y$ is obtained by choosing representatives of $x$ and $y$ so that the representative of $x$ is located to the left of the representative of $y$, and then by taking their union as a representative of $xy$.

As a monoidal category, the category $\dfrt$ is generated by the following dpr-tangles:
\[
\varnothing=\begin{tikzpicture}[baseline=(x.south)]
 \draw[very thin] (-.1,0)--(1.1,0);
\draw[very thin](-.1,1)--(1.1,1);
\node (x) at (.5,.5) [inner sep=.5mm,circle] {};
 \end{tikzpicture}\ ,\
 \uparrow\,=\begin{tikzpicture}[baseline=(x.south)]
 \draw[very thin] (-.1,0)--(1.1,0);
\draw[very thin] (-.1,1)--(1.1,1);
\node (x) at (.5,.5) [inner sep=.5mm,circle] {};
 \draw[->] (.5,0)--(.5,1);
 \node at (.5,-.12){\tiny $+$};
 \node at (.5,1.12){\tiny $+$};
 \end{tikzpicture}\ ,\
 \downarrow\,=\begin{tikzpicture}[baseline=(x.south)]
 \draw[very thin] (-.1,0)--(1.1,0);
\draw[very thin] (-.1,1)--(1.1,1);
\node (x) at (.5,.5) [inner sep=.5mm,circle] {};
 \draw[->] (.5,1)--(.5,0);
 \node at (.5,-.12){\tiny $-$};
 \node at (.5,1.12){\tiny $-$};
 \end{tikzpicture}\ ,\
\rightthreetimes=\begin{tikzpicture}[baseline=(x.south)]
\draw[very thin] (-.1,0)--(1.1,0);
\draw[very thin] (-.1,1)--(1.1,1);
\node (x) at (.5,.5) [inner sep=.5mm,circle] {};
 \draw[->] (x.north east) to [out=45,in=-90] (1,1);
 \draw (x.south west) to (x.north east);
 \draw (0,0) to [out=90,in=-135] (x.south west);

 \draw (1,0) to [out=90,in=-45] (x.south east);
 \draw[->] (x.north west) to [out=135,in=-90] (0,1);
 \node at (0,-.12){\tiny $+$};
 \node at (0,1.12){\tiny $+$};
 \node at (1,-.12){\tiny $+$};
 \node at (1,1.12){\tiny $+$};
 \end{tikzpicture}
 \ ,\
 \leftthreetimes=\begin{tikzpicture}[baseline=(x.south)]
 \draw[very thin] (-.1,0)--(1.1,0);
\draw[very thin] (-.1,1)--(1.1,1);
 \node (x) at (.5,.5) [inner sep=.5mm,circle] {};
 \draw[->] (x.north east) to [out=45,in=-90] (1,1);
 \draw (x.south east) to (x.north west);
 \draw (0,0) to [out=90,in=-135] (x.south west);

 \draw (1,0) to [out=90,in=-45] (x.south east);
 \draw[->] (x.north west) to [out=135,in=-90] (0,1);
 \node at (0,-.12){\tiny $+$};
 \node at (0,1.12){\tiny $+$};
 \node at (1,-.12){\tiny $+$};
 \node at (1,1.12){\tiny $+$};
 \end{tikzpicture}\ ,
 \]
 \[
 \nwarrow\,=\begin{tikzpicture}[baseline=(x.south)]
 \draw[very thin] (-.1,0)--(1.1,0);
\draw[very thin] (-.1,1)--(1.1,1);
\node (x) at (.5,.5) [inner sep=.5mm,circle] {};
 \draw[->] (1,1) to [out=-90,in=-90](0,1);
 \node at (0,1.12){\tiny $+$};
 \node at (1,1.12){\tiny $-$};
 \end{tikzpicture}\ ,\
 \swarrow\,=\begin{tikzpicture}[baseline=(x.south)]
 \draw[very thin] (-.1,0)--(1.1,0);
\draw[very thin] (-.1,1)--(1.1,1);
 \node (x) at (.5,.5){};
 \draw[->] (1,0) to [out=90,in=90](0,0);
 \node at (0,-.12){\tiny $-$};
 \node at (1,-.12){\tiny $+$};
 \end{tikzpicture}\ ,\
 \nearrow\,=\begin{tikzpicture}[baseline=(x.south)]
\draw[very thin] (-.1,0)--(1.1,0);
\draw[very thin] (-.1,1)--(1.1,1);
\node (x) at (.5,.5){};
 \draw[->] (0,1) to [out=-90,in=-90](1,1);
 \node at (0,1.12){\tiny $-$};
 \node at (1,1.12){\tiny $+$};
 \end{tikzpicture}\ ,\
 \searrow\,=\begin{tikzpicture}[baseline=(x.south)]
 \draw[very thin] (-.1,0)--(1.1,0);
\draw[very thin] (-.1,1)--(1.1,1);
\node (x) at (.5,.5){};
 \draw[->] (0,0) to [out=90,in=90](1,0);
 \node at (0,-.12){\tiny $+$};
 \node at (1,-.12){\tiny $-$};
 \end{tikzpicture}\ .
\]
For any dpr-tangle $x$, there are two functions on the set of its connected components
\[
\operatorname{wr}\colon \pi_0(x)\to\mathbb{Z},\quad \operatorname{wn}\colon \pi_0(x)\to2^{-1}\mathbb{Z},
\]
called respectively \emph{writhe} and \emph{winding number}. For a connected dpr-tangle $x$, these functions are calculated according to the formulae
\[
\operatorname{wr}(x)=\#(\rightthreetimes)-\#(\leftthreetimes),\quad 2\operatorname{wn}(x)=
\#(\nearrow)+\#(\swarrow)-\#(\searrow)-\#(\nwarrow),
\]
where $\#(y)$ denotes the number of generating elements of type $y$ in a given decomposition of a diagram representing $x$.
\begin{remark}
The usual category of framed directed tangles is a quotient category of $\dfrt$ obtained by adding the Reidemeister move of type $I'$.
\end{remark}
\subsection{Knot invariants from rigid $R$-matrices}
The following theorem is essentially a reformulation of a result of N. Reshetikhin (Theorem~1.1 of \cite{Reshetikhin}).
\begin{theorem}\label{Th:main}
Let $(C,X,\rho)$ be a rigid $R$-matrix. Fix a duality
\(
p=\langle Y,X,\eta,\epsilon\rangle.
\)
 Then,
for any integer $k$ there exists a unique functor $\Phi_{\rho,k}\colon \dfrt\to C$ such that
\[
\begin{tikzpicture}[baseline=(x.center)]
\draw[very thin] (-.1,0)--(1.1,0);
\draw[very thin] (-.1,1)--(1.1,1);
 \node (x) at (.5,.5) [inner sep=.5mm,circle] {};
 \draw[->] (x.north east) to [out=45,in=-90] (1,1);
 \draw (x.south west) to (x.north east);
 \draw (0,0) to [out=90,in=-135] (x.south west);

 \draw (1,0) to [out=90,in=-45] (x.south east);
 \draw[->] (x.north west) to [out=135,in=-90] (0,1);
 \end{tikzpicture}\mapsto\rho,\
 \begin{tikzpicture}[baseline=(x.center)]
 \draw[very thin] (-.1,0)--(1.1,0);
\draw[very thin] (-.1,1)--(1.1,1);
\node (x) at (.5,.5){};
 \draw[->] (1,1) to [out=-90,in=-90](0,1);
 \end{tikzpicture}\mapsto \eta,\
 \begin{tikzpicture}[baseline=(x.center)]
 \draw[very thin] (-.1,0)--(1.1,0);
\draw[very thin] (-.1,1)--(1.1,1);
 \node (x) at (.5,.5){};
 \draw[->] (1,0) to [out=90,in=90](0,0);
 \end{tikzpicture}\mapsto \epsilon,\
 \begin{tikzpicture}[baseline=(x.center)]
 \draw[very thin] (-.1,0)--(1.1,0);
\draw[very thin] (-.1,1)--(1.1,1);
\node (x) at (.5,.5){};
 \draw[->] (0,1) to [out=-90,in=-90](1,1);
 \end{tikzpicture}\mapsto \bar\eta,\
 \begin{tikzpicture}[baseline=(x.center)]
 \draw[very thin] (-.1,0)--(1.1,0);
\draw[very thin] (-.1,1)--(1.1,1);
 \node (x) at (.5,.5){};
 \draw[->] (0,0) to [out=90,in=90](1,0);
 \end{tikzpicture}\mapsto \bar\epsilon,
\]
where
\[
\bar\eta=Y\omega^{\circ k}\circ\rho_-^{\circ-1}\circ\eta,\quad \bar\epsilon=\epsilon\circ\rho_+^{\circ-1}\circ\omega^{\circ-k}Y,\quad \rho_\pm=r_p(\rho^{\circ\pm1}),
\]
\[
\omega=(\epsilon\circ\rho_-^{\circ-1})X\circ X(\rho_-^{\circ-1}\circ \eta).
\]
If, furthermore, there exists a morphism $\nu\colon X\to X$ such that
\[
X\nu\circ\rho^{\circ\pm 1}=\rho^{\circ\pm 1}\circ\nu X,\quad \omega=\nu^{\circ 2},
\]
then the parameter $k$ can take half-integer values, and the functor $\Phi_{-1/2,\rho}$ factors through the category of framed directed tangles.
\end{theorem}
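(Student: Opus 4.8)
The plan is to regard $\dfrt$ as the monoidal category presented by the eight listed generators subject to the defining relations coming from planar isotopy of the strip --- in particular the zigzag identities relating the cup and cap generators --- together with the oriented Reidemeister moves of types II and III. Granting this presentation, uniqueness of $\Phi_{\rho,k}$ is immediate: every morphism of $\dfrt$ is a composite of tensor products of generators, and a strict monoidal functor is determined by its values on generators, which are prescribed. The whole content is therefore existence, i.e.\ that the prescribed images satisfy each defining relation; I would organise the verification by type of relation. Throughout, rigidity guarantees that $\rho_\pm=r_p(\rho^{\circ\pm1})$ are isomorphisms, so that $\bar\eta$, $\bar\epsilon$ and $\omega$ are well defined.

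First, the duality (zigzag) relations. For the pair $(\eta,\epsilon)$ they hold by the hypothesis that $p=\langle Y,X,\eta,\epsilon\rangle$ is a duality, i.e.\ by \eqref{E:first}. For the pair $(\bar\eta,\bar\epsilon)$ I would show that $\langle X,Y,\bar\eta,\bar\epsilon\rangle$ is again a duality. At $k=0$ this is exactly the computation in the proof of Lemma~\ref{Rem:left-right-rigid}, using the invertibility of $\rho_\pm$; the issue for general $k$ is that the two insertions $Y\omega^{\circ k}$ in $\bar\eta$ and $\omega^{\circ -k}Y$ in $\bar\epsilon$ must cancel in each zigzag identity. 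Expanding, say, $\bar\epsilon X\circ X\bar\eta=X$ and using the $k=0$ case, the two insertions land on the two ends of a single strand closed up by the cup and the cap, so they will cancel provided $\omega$ is invertible and can be transported along a strand through $\bar\eta$ and $\bar\epsilon$. I would isolate this sliding (centrality) property of $\omega$ as the key lemma and extract it directly from the definition $\omega=(\epsilon\circ\rho_-^{\circ-1})X\circ X(\rho_-^{\circ-1}\circ\eta)$, which exhibits $\omega$ as the very curl morphism whose naturality is at stake.

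Next, the crossing relations. Every oriented Reidemeister move involving strands of mixed orientation is obtained from an all-upward configuration by bending strands with the cup and cap; concretely the rotated crossings are the images of $\rho^{\circ\pm1}$ under $r_p$ and its variants. I would thus reduce each mixed type II move to the statement that the relevant pair of rotated crossings are mutually inverse, which is precisely the rigidity hypothesis combined with Lemmas~\ref{Lem:iff} and~\ref{Rem:left-right-rigid}, and reduce each mixed type III move to the all-upward Yang--Baxter equation --- valid by the definition of an $R$-matrix --- transported along the bends by the naturality of the bijections $a_p$, $b_p$. Since none of the crossing relations involves $\omega$, they hold for every $k$ at once; together with the previous paragraph this establishes the first assertion for all integers $k$.

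Finally, for the half-integer extension I would use $\nu$ to give meaning to $\omega^{\circ k}$ whenever $2k\in\mathbb{Z}$ by setting $\omega^{\circ k}:=\nu^{\circ 2k}$, which agrees with the integer case since $\omega=\nu^{\circ2}$. The hypothesis $X\nu\circ\rho^{\circ\pm1}=\rho^{\circ\pm1}\circ\nu X$ says that $\nu$ slides through crossings, exactly what is needed to rerun the verifications above with $\nu^{\circ2k}$ in place of $\omega^{\circ k}$ (the sliding of $\omega$ used above now being split into two halves). To see that $\Phi_{\rho,-1/2}$ descends to framed directed tangles it remains to check the added move $I'$: I would compute the endomorphism of $X$ that the functor assigns to a curl and show that, with the normalisation $k=-1/2$, it is the power of $\nu$ dictated by the writhe and winding number, so that the two sides of $I'$ agree. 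I expect the main obstacle throughout to be precisely this bookkeeping: establishing the sliding property of $\omega$ (and its refinement for $\nu$) and checking that the writhe and winding-number contributions of the bent strands combine so that the value $k=-1/2$ is exactly the one making the curl equal to the balancing $\nu$.
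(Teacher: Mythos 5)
Your overall strategy --- present $\dfrt$ by the listed generators and the regular-isotopy relations and then check each relation on the prescribed images --- is indeed the line of argument the paper points to (its proof is a one-line reference to Theorem~1.1 of \cite{Reshetikhin}, which proceeds by exactly this kind of verification). However, there is a genuine gap in the middle of your proposal, namely the sentence ``Since none of the crossing relations involves $\omega$, they hold for every $k$ at once.'' This is false. In $\dfrt$ the only crossing generators are all-upward, so every mixed-orientation type II or III relation, written as a word in the generators, necessarily contains the clockwise cup $\nearrow$ and cap $\searrow$, whose prescribed images $\bar\eta$, $\bar\epsilon$ contain $\omega^{\circ\pm k}$. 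Concretely, in the antiparallel type II move one of the two crossings is represented by the composite $(YX\bar\epsilon)\circ(Y\rho^{\pm1}Y)\circ(\bar\eta\, XY)$, i.e.\ by $l_q(\rho^{\pm1})$ for the duality $q=\langle X,Y,\bar\eta,\bar\epsilon\rangle$, while the other crossing is represented using $\eta$, $\epsilon$ only and is therefore $k$-independent. Since passing from the parameter $k$ to $0$ replaces $l_q(\rho^{\pm1})$ by $l_{q_0}\bigl(X\omega^{\circ-k}\circ\rho^{\pm1}\circ\omega^{\circ k}X\bigr)$ (the insertions conjugate the crossing on the strand that runs through it), and $l_{q_0}$ is a bijection, this relation holds for every integer $k$ if and only if $\rho^{\pm1}\circ\omega^{\circ k}X=X\omega^{\circ k}\circ\rho^{\pm1}$. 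So the $k$-dependence sits precisely on the crossing relations, and the sliding lemma you formulated is indispensable there --- whereas for the zigzag relations, where you actually invoke it, it is not needed at all: there the two insertions cancel by the interchange law alone, since they sit on disjoint tensor factors and can be moved to the two ends of the composite.

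Moreover, the sliding property of $\omega$ cannot be ``extracted directly from the definition'' as you hope; it is a genuine statement whose natural proof goes the other way around. One clean route: first prove the theorem for $k=0$ (this is the core of Reshetikhin's verification and involves no $\omega$-insertions); then observe that in $\dfrt$ a kink slides through a crossing by type II and III moves, and that the left-closed positive kink has $\Phi_{\rho,0}$-image $\omega$, so functoriality of $\Phi_{\rho,0}$ yields $\rho\circ\omega X=X\omega\circ\rho$; only then does your bookkeeping for general integer $k$ (and, assuming the hypothesis on $\nu$, for half-integer $k$) go through. Note the consistency check you are missing: the second half of the theorem must \emph{assume} the analogous sliding identity for $\nu$, which signals that such identities are theorems about the given data rather than formal consequences of the definitions. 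Two smaller points: uniqueness also requires observing that the image of the negative crossing $\leftthreetimes$ is forced to be $\rho^{\circ-1}$ by the parallel type II relation, and your reduction of mixed type II moves to ``the rotated crossings are mutually inverse'' must be stated with care, since the two crossings in such a move are rotated with respect to \emph{different} dualities ($p$ versus $\langle X,Y,\bar\eta,\bar\epsilon\rangle$), which is exactly why the $\rho_{\mp}^{\circ-1}$ factors are built into $\bar\eta$ and $\bar\epsilon$. As it stands, your proposal establishes (modulo the standard $k=0$ verification) only the case $k=0$; the uniform statement for all $k$ is precisely what is left unproved.
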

\begin{proof}
The proof goes along a similar line of arguments as the proof of Theorem~1.1 of \cite{Reshetikhin}.
\end{proof}
\begin{theorem}\label{Th:second}
Let $x$ be a connected dpr-tangle with $\operatorname{dom}(x)=\operatorname{cod}(x)=(+)$. Then, the morphism
\begin{equation}\label{E:knot-invariant}
\Psi_\rho(x)=\omega^{\circ-(\operatorname{wr}(x)+(1+2k)\operatorname{wn}(x))/2}\circ \Phi_{\rho,k}(x)\colon X\to X
\end{equation}
is independent of $k$ and invariant under all Reidemeister moves of type I.
\end{theorem}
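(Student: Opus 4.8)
\emph{Setup and reduction.} The statement contains two independent assertions, independence of $k$ and invariance under the type~I moves, and I would treat them separately, in both cases exploiting that $\Phi_{\rho,k}$ is already a functor on $\dfrt$ (Theorem~\ref{Th:main}) and hence a regular isotopy invariant. The starting observation is that among the generating morphisms only $\bar\eta$ and $\bar\epsilon$ depend on $k$, and they do so solely through the factors $\omega^{\circ k}$ and $\omega^{\circ-k}$; the morphism $\omega$ and the images of $\rho,\eta,\epsilon$ are $k$-independent. Thus all of the $k$-dependence of $\Phi_{\rho,k}(x)$ is localised at the occurrences of $\nearrow$ and $\searrow$ in a diagram of $x$.

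\emph{Independence of $k$.} First I would record that $\omega\colon X\to X$ behaves as a twist: it propagates along a strand, i.e.\ it commutes with $\rho^{\circ\pm1}$ after the evident reindexing and is compatible with the dualities $p=\langle Y,X,\eta,\epsilon\rangle$ and $\bar p$ produced in Lemma~\ref{Rem:left-right-rigid}. Granting this naturality, connectedness of $x$ means the diagram is a single oriented arc, so every factor $\omega^{\circ k}$ created at an occurrence of $\nearrow$ and every factor $\omega^{\circ-k}$ created at an occurrence of $\searrow$ can be transported to the outgoing end and amalgamated into a single power $\omega^{\circ k(\#(\nearrow)-\#(\searrow))}$. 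A Morse-theoretic count for the arc joining the bottom $(+)$ to the top $(+)$ shows that the numbers of local minima (cups $\nwarrow,\nearrow$) and of local maxima (caps $\swarrow,\searrow$) coincide, whence $\#(\nwarrow)+\#(\nearrow)=\#(\swarrow)+\#(\searrow)$; combined with the definition of $\operatorname{wn}$ this yields $\#(\nearrow)-\#(\searrow)=\operatorname{wn}(x)$ (in particular $\operatorname{wn}(x)\in\mathbb{Z}$). Consequently $\Phi_{\rho,k}(x)=\omega^{\circ k\,\operatorname{wn}(x)}\circ\Phi_{\rho,0}(x)$, and in \eqref{E:knot-invariant} the $k$-linear part $-k\,\operatorname{wn}(x)$ of the exponent cancels this collected power, leaving $\Psi_\rho(x)=\omega^{\circ-(\operatorname{wr}(x)+\operatorname{wn}(x))/2}\circ\Phi_{\rho,0}(x)$, which no longer involves $k$.

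\emph{Invariance under type~I moves.} Here I would use the $k$-independent form just obtained. A type~I move alters $x$ by inserting an elementary curl; since $\operatorname{wr}$, $\operatorname{wn}$ and $\Phi_{\rho,0}$ are all regular isotopy invariants I may slide the curl to the outgoing end, so that the modified tangle is $\gamma\circ x$ for one of the four elementary curls $\gamma\colon(+)\to(+)$. As $\operatorname{wr}$ and $\operatorname{wn}$ are additive under this stacking and $\Phi_{\rho,0}$ is a functor, and since powers of $\omega$ commute among themselves, the equality $\Psi_\rho(\gamma\circ x)=\Psi_\rho(x)$ reduces to the single identity $\Phi_{\rho,0}(\gamma)=\omega^{\circ(\operatorname{wr}(\gamma)+\operatorname{wn}(\gamma))/2}$ for each $\gamma$. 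The four curls have $(\operatorname{wr}(\gamma),\operatorname{wn}(\gamma))\in\{(\pm1,\pm1)\}$, so the right-hand side is $\omega^{\circ1}$, $\operatorname{id}_X$, $\operatorname{id}_X$, $\omega^{\circ-1}$ respectively. I would verify these by writing each $\gamma$ as a word in the generators, applying $\Phi_{\rho,0}$, and reducing the resulting composite of $\rho^{\circ\pm1},\eta,\epsilon,\bar\eta,\bar\epsilon$ by means of the duality relations \eqref{E:first}, the relations of the barred duality $\bar p$, the Yang--Baxter equation, and the definition of $\omega$: for the two \emph{pure} curls the value is $\omega^{\circ\pm1}$ almost by definition (the second being the mirror of the first, equal to $\omega^{\circ-1}$ by rigidity), while for the two \emph{mixed} curls the crossing is straightened out by the rotation and the value is $\operatorname{id}_X$.

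\emph{Main obstacle.} Everything rests on the two twisting facts invoked above: the naturality of $\omega$ along a strand together with its compatibility with the dualities $p$ and $\bar p$, and the four elementary curl identities. Both are of the same nature as, and can be carried out along the lines of, the computations in Theorem~1.1 of \cite{Reshetikhin}; they use crucially the rigidity of $\rho$, through the existence of the barred duality $\bar p$ (Lemma~\ref{Rem:left-right-rigid}) and the invertibility of the rotated morphisms $\rho_\pm=r_p(\rho^{\circ\pm1})$ provided by Lemmas~\ref{Lem:iff} and~\ref{Rem:left-right-rigid}. I expect the curl identities, and in particular the correct matching of signs between the crossing type, the rotation, and the resulting power of $\omega$, to be the principal difficulty; the independence of $k$ is comparatively formal once the naturality of $\omega$ is established.
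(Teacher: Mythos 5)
Your proposal is correct, and its core coincides with the paper's proof: everything is reduced to the evaluation of $\Phi_{\rho,k}$ on the four elementary curls, together with the fact that a curl can be transported along the strand by Reidemeister II/III moves so that functoriality of $\Phi_{\rho,k}$ (Theorem~\ref{Th:main}) handles the rest. Indeed, your four values $\omega^{\circ(\operatorname{wr}(\gamma)+\operatorname{wn}(\gamma))/2}$ are exactly the paper's formulae $\omega^{\circ-k},\omega^{\circ-1-k},\omega^{\circ1+k},\omega^{\circ k}$ specialized to $k=0$. Where you genuinely diverge is the treatment of $k$-independence. The paper keeps the $k$-dependence inside the curl formulae and derives both assertions from them at once: Reidemeister~I invariance for each fixed $k$ follows by your sliding argument, and $k$-independence then comes for free, e.g.\ by using type~I moves to pass to a diagram containing no $\nearrow$ or $\searrow$ generators, on which all the functors $\Phi_{\rho,k}$ coincide and the winding number vanishes. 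You instead prove $k$-independence first, via naturality of $\omega$ (bead-sliding) plus the Morse-count identity $\#(\nearrow)-\#(\searrow)=\operatorname{wn}(x)$, arriving at $\Phi_{\rho,k}(x)=\omega^{\circ k\operatorname{wn}(x)}\circ\Phi_{\rho,0}(x)$. This is a valid alternative decomposition, and it makes transparent why the coefficient $1+2k$ appears in \eqref{E:knot-invariant}; but note that it demands extra input beyond the curl formulae, namely the commutation of $\omega$ with $\rho^{\circ\pm1}$ and its compatibility with both dualities. You flag this as the main obstacle requiring Reshetikhin-style computation, yet it can in fact be deduced from what you already have: inserting $\omega$ at a point of an upward strand equals inserting the curl $\gamma$ with $\Phi_{\rho,0}(\gamma)=\omega$ there, and curls slide through crossings and extrema by regular isotopy, so functoriality yields the naturality identities with no new computation.

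One point the paper makes explicitly and you omit: the exponent $-(\operatorname{wr}(x)+(1+2k)\operatorname{wn}(x))/2$, and likewise your reduced exponent $-(\operatorname{wr}(x)+\operatorname{wn}(x))/2$, is only meaningful because $\operatorname{wr}(x)+\operatorname{wn}(x)$ is even for a connected $(1,1)$-tangle with both ends positive. Your Morse count gives $\operatorname{wn}(x)\in\mathbb{Z}$ but not this parity; it follows, for instance, because $\operatorname{wr}+\operatorname{wn}$ mod $2$ is unchanged by all Reidemeister moves and by crossing changes, hence equals its value $0$ on the trivial strand. This is a small well-definedness check, but without it your final formula $\Psi_\rho(x)=\omega^{\circ-(\operatorname{wr}(x)+\operatorname{wn}(x))/2}\circ\Phi_{\rho,0}(x)$ is not yet defined.
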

\begin{proof} The statement directly follows from Theorem~\ref{Th:main} and the formulae
\[
\Phi_{\rho,k}\left(\begin{tikzpicture}[baseline=(x.center)]
\draw[very thin] (-.1,0)--(1.1,0);
\draw[very thin] (-.1,1)--(1.1,1);
 \node (x) at (.5,.5) [inner sep=.5mm,circle] {};
 \draw (x.north east) to [out=45,in=90] (.8,.5);
 \draw (x.south west) to (x.north east);
 \draw (.5,0) to [out=90,in=-135] (x.south west);
 \draw (.8,.5) to [out=-90,in=-45] (x.south east);
 \draw[->] (x.north west) to [out=135,in=-90] (.5,1);
 \end{tikzpicture}\right)=\omega^{\circ-k},\quad
 \Phi_{\rho,k}\left(\begin{tikzpicture}[baseline=(x.center)]
\draw[very thin] (-.1,0)--(1.1,0);
\draw[very thin] (-.1,1)--(1.1,1);
 \node (x) at (.5,.5) [inner sep=.5mm,circle] {};
 \draw (x.north east) to [out=45,in=90] (.8,.5);
 \draw (x.south east) to (x.north west);
 \draw (.5,0) to [out=90,in=-135] (x.south west);
 \draw (.8,.5) to [out=-90,in=-45] (x.south east);
 \draw[->] (x.north west) to [out=135,in=-90] (.5,1);
 \end{tikzpicture}\right)=\omega^{\circ-1-k},
 \]
 \[
\Phi_{\rho,k}\left(\begin{tikzpicture}[baseline=(x.center)]
\draw[very thin] (-.1,0)--(1.1,0);
\draw[very thin] (-.1,1)--(1.1,1);
 \node (x) at (.5,.5) [inner sep=.5mm,circle] {};
 \draw (x.north west) to [out=135,in=90] (.2,.5);
 \draw (x.south west) to (x.north east);
 \draw (.5,0) to [out=90,in=-45] (x.south east);
 \draw (.2,.5) to [out=-90,in=-135] (x.south west);
 \draw[->] (x.north east) to [out=45,in=-90] (.5,1);
 \end{tikzpicture}\right)=\omega^{\circ1+k},\quad
 \Phi_{\rho,k}\left(\begin{tikzpicture}[baseline=(x.center)]
\draw[very thin] (-.1,0)--(1.1,0);
\draw[very thin] (-.1,1)--(1.1,1);
 \node (x) at (.5,.5) [inner sep=.5mm,circle] {};
 \draw (x.north west) to [out=135,in=90] (.2,.5);
 \draw (x.south east) to (x.north west);
 \draw (.5,0) to [out=90,in=-45] (x.south east);
 \draw (.2,.5) to [out=-90,in=-135] (x.south west);
 \draw[->] (x.north east) to [out=45,in=-90] (.5,1);
 \end{tikzpicture}\right)=\omega^{\circ k}.
 \]
 Notice that for such $x$ the sum $\operatorname{wr}(x)+\operatorname{wn}(x)$ is always even, so that the power of $\omega$ in \eqref{E:knot-invariant} is an integer.
\end{proof}
If we define a knot as a $(1,1)$-tangle, then Theorem~\ref{Th:second} implies that function $\Psi_\rho (x)$ is an invariant of the knot represented by  $x$.
\section{$R$-matrices from $T$-matrices}\label{Sec:2}
In this section, we define rigid $T$-matrices and, following the ideas of \cite{kash1}, describe a canonical construction of rigid $R$-matrices from rigid $T$-matrices.
\subsection{Rigid $T$-matrices}
\begin{definition}
A \emph{ $T$-matrix} is a quadruple $(C,X,\sigma,\tau)$ given by an involutive $R$-matrix $(C,X,\sigma)$, i.e.  satisfying the condition $\sigma=\sigma^{\circ-1}$, and an isomorphism  $\tau\colon X^2\to X^2$ satisfying the equations
\begin{equation}\label{E:naturality}
\tau X\circ X\sigma\circ\sigma X=X\sigma\circ\sigma X\circ X\tau,
\end{equation}
\begin{equation}\label{E:pentagon}
\tau X\circ X\sigma\circ\tau X=X\tau\circ\tau X\circ X\tau.
\end{equation}
\end{definition}
As in the case of $R$-matrices, the morphism $\tau$ itself will informally be called $T$-matrix, if the underlying quadruple is clear from the context.
\begin{remark}\label{rem:tay-tau-inverse-symmetry}
It is easily verified that if $(C,X,\sigma,\tau)$ is a $T$-matrix, then $(C,X,\sigma,\tau^{\circ-1})$ is also a $T$-matrix. This symmetry will allow to simplify some proofs below.
\end{remark}
\begin{definition}
A $T$-matrix $(C,X,\sigma,\tau)$ is called \emph{rigid} if $(C,X,\sigma)$ is a rigid $R$-matrix and $\tau^{\circ\pm 1}$ are $X$-left isomorphisms (and, by Lemma~\ref{Lem:iff}, also $X$-right isomorphisms).
\end{definition}
\begin{lemma}\label{Lem:double-rigidity}
 For a rigid $T$-matrix $(C,X,\sigma,\tau)$ and a duality $p=\langle Y,X,\eta,\epsilon\rangle$,  the morphisms $(r_p(\tau^{\circ\pm1}))^{\circ-1}$ are $X$-left and $Y$-right isomorphisms.
\end{lemma}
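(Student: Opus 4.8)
First I would record the standing structure. Since $(C,X,\sigma)$ is a rigid $R$-matrix, Lemma~\ref{Rem:left-right-rigid} makes $X$ s-rigid, so that---together with the chosen $p=\langle Y,X,\eta,\epsilon\rangle$---all the dualities in which $X$ is a left dual and $Y$ is a right dual are available. Because $\tau^{\circ\pm1}$ are $X$-left isomorphisms, Lemma~\ref{Lem:iff} shows they are $X$-right isomorphisms as well, so $r_p(\tau^{\circ\pm1})\colon YX\to XY$ are honest isomorphisms and
\[
\mu_\pm=(r_p(\tau^{\circ\pm1}))^{\circ-1}\colon XY\to YX
\]
are well defined. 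As each $\mu_\pm$ has the form $XY\to YX$, Lemma~\ref{Lem:iff} reduces the claim to showing that the $\mu_\pm$ are $X$-left isomorphisms, the $Y$-right property being then automatic. Finally, by Remark~\ref{rem:tay-tau-inverse-symmetry} the quadruple $(C,X,\sigma,\tau^{\circ-1})$ is again a rigid $T$-matrix, and $\mu_-$ for $\tau$ is $\mu_+$ for $\tau^{\circ-1}$; so it suffices to treat $\mu_+$.

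The plan is then to mimic the bookkeeping in the proof of Lemma~\ref{Lem:iff}. I would fix an auxiliary duality $s=\langle X,Z,\eta',\epsilon'\rangle$ exhibiting $X$ as a left dual, and invoke the mixed identity relating $l$ and $r$ through a product of dualities and inversion---the same device that drives Lemma~\ref{Lem:iff}. In the present configuration it should read
\[
l_{ps}\bigl((r_p(\tau))^{\circ-1}\bigr)=(l_s(\tau))^{\circ-1},
\]
that is $l_{ps}(\mu_+)=(l_s(\tau))^{\circ-1}$. Since $\tau$ is an $X$-left isomorphism, $l_s(\tau)$ is invertible, hence so is its inverse, whence $l_{ps}(\mu_+)$ is an isomorphism; this is exactly the statement that $\mu_+$ is an $X$-left isomorphism.

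The hard part will be purely the duality bookkeeping behind the displayed identity: one expands $l_{ps}$ and $r_p$ via the defining formulae for $a$ and $b$ and the product duality $ps$, and then collapses the composite using the zig-zag relations~\eqref{E:first}, precisely as in Lemma~\ref{Lem:iff}. The subtle point---and the easiest place to commit an index slip---is to verify that the product $ps$ places $X$ in the slot making $l_{ps}(\mu_+)$ the transpose that certifies $X$-left invertibility; once the objects are matched the rest is formal. I expect that the $T$-matrix equations~\eqref{E:naturality} and~\eqref{E:pentagon} are not needed at all: the conclusion should rest solely on the rigidity of $\tau$ and the duality calculus of the previous subsections.
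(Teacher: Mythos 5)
Your opening reduction is sound and coincides with the paper's own first step: Remark~\ref{rem:tay-tau-inverse-symmetry} disposes of the sign, and Lemma~\ref{Rem:left-right-rigid} together with Lemma~\ref{Lem:iff} lets one prove only one of the two properties of $\mu_+=(r_p(\tau))^{\circ-1}$. Everything after that, however, rests on a false premise. The device driving Lemma~\ref{Lem:iff} transfers invertibility between the transposes of a \emph{fixed} morphism taken in the \emph{two different} tensor factors; it is formal because transposing in \emph{both} factors commutes with inversion, which is exactly what the identity $(r_q(f))^{\circ-1}=r_{pq}((l_p(f))^{\circ-1})$ expresses. In the present lemma the situation is structurally different: $\mu_+$ is produced by transposing and then inverting, and you must transpose again \emph{in the same factor}. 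Transposition in a single factor does not commute with inversion, so no identity of the kind you propose, $l_{ps}(\mu_+)=(l_s(\tau))^{\circ-1}$, can hold by ``pure bookkeeping'' (as written it is not even well-typed: with the paper's conventions $ps=\langle YX,ZX,\ldots\rangle$, so $l_{ps}$ does not accept a morphism $XY\to YX$).

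The decisive objection is that your argument uses only the invertibility of $\tau^{\circ\pm1}$ and of their transposes --- never $\sigma$, never \eqref{E:naturality}, never \eqref{E:pentagon} --- so, if it were correct, it would prove the conclusion for \emph{every} such $\tau$. That stronger statement is false. Take $C=\cfdvs_{\mathbb{K}}$, $\dim X=2$ with basis $\{v_1,v_2\}$, let $s=s_{X,X}$, let $e\colon X^2\to X^2$ be given by $e(v_iv_j)=\delta_{ij}\sum_k v_kv_k$, and write $\operatorname{id}$ for the identity of $X^2$. Identifying all relevant Hom-spaces with $4\times4$ matrices via the bases $\{v_i\}$, $\{w^i\}$ and the canonical dualities \eqref{E:can-du-1}, \eqref{E:can-du-2}, a direct computation shows that each of the $l$- and $r$-transposes involved acts on the span of $\{\operatorname{id},s,e\}$ by fixing $s$ and exchanging $\operatorname{id}\leftrightarrow e$; moreover $s\circ e=e\circ s=e$ and $e\circ e=2e$. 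Now put $\tau=s+\frac12e$. Then $\tau^{\circ-1}=s-\frac14e$, and the transposes of $\tau^{\circ\pm1}$ are $s+\frac12\operatorname{id}$ and $s-\frac14\operatorname{id}$, both invertible; hence $\tau^{\circ\pm1}$ are $X$-left and $X$-right isomorphisms, i.e.\ all hypotheses your proof invokes are satisfied. Yet $(r_p(\tau))^{\circ-1}=\frac43(s-\frac12\operatorname{id})$, whose transpose $\frac43(s-\frac12e)$ annihilates the vector $\sum_kv_kv_k$ and is therefore singular: this $\mu_+$ is \emph{not} an $X$-left isomorphism. So the conclusion of Lemma~\ref{Lem:double-rigidity} genuinely depends on the $T$-matrix structure, which is precisely what the paper's proof exploits: it exhibits explicit morphisms $\alpha$ and $\beta$, built from $\sigma$, $\tau$, $\tau_+$ and their transposes, and the verification $\beta\circ r_q(\tau_+)=r_q(\tau_+)\circ\alpha=X^2$ --- the real content of the lemma --- is where the pentagon \eqref{E:pentagon} and naturality \eqref{E:naturality} equations enter.
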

\begin{proof} Due to Lemma~\ref{Rem:left-right-rigid} and Remark~\ref{rem:tay-tau-inverse-symmetry}, it is enough to prove that $\tau_+=(r_p(\tau))^{\circ-1}$ is a $Y$-right isomorphism. Defining another duality
\[
q=\langle X,Y,\bar\eta=(r_p(\sigma))^{\circ-1}\circ\eta,\bar\epsilon=\epsilon\circ (r_p(\sigma))^{\circ-1}\rangle,
\]
the morphisms
\[
\alpha=a_q^{-1}(l_q(\tau)X\circ X\tau_+\circ\sigma Y\circ Xr_{pq}(\tau_+)\circ(r_p(\tau)\circ\bar\eta)X)
\]
and
\[
\beta=a_p((\epsilon\circ l_q(\tau))X\circ X\tau_+\circ\sigma Y\circ Xr_{pq}(\tau_+)\circ r_p(\tau)X)
\]
are such that
\[
\beta\circ r_q(\tau_+)=r_q(\tau_+)\circ\alpha=X^2
\]
which, by considering the product $\beta\circ r_q(\tau_+)\circ\alpha$, imply that $\alpha=\beta=(r_q(\tau_+))^{\circ-1}$. Thus, $\tau_+$ is a $Y$-right isomorphism.
\end{proof}
\subsection{Rigid $R$-matrices from rigid $T$-matrices}
\begin{theorem}
For a rigid $T$-matrix $(C,X,\sigma,\tau)$ and a duality $p=\langle Y,X,\eta,\epsilon\rangle$, the triple
\[
(C,XY,\rho_\tau),\quad \rho_\tau=X(r_p(\tau))^{\circ-1}Y\circ \tau r_p(r_p(\tau))\circ Xr_p(\tau^{\circ-1})Y,
\]
is a rigid $R$-matrix with self-dual object $XY$.
\end{theorem}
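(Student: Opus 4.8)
The plan is to unpack the conclusion into four assertions and prove each in turn: that $XY$ is self-dual, that $\rho_\tau$ is invertible, that $\rho_\tau$ satisfies the Yang--Baxter equation, and that the resulting $R$-matrix is rigid. Throughout I keep the given duality $p=\langle Y,X,\eta,\epsilon\rangle$ alongside the auxiliary duality $q=\langle X,Y,\bar\eta,\bar\epsilon\rangle$ with $\bar\eta=(r_p(\sigma))^{\circ-1}\circ\eta$ and $\bar\epsilon=\epsilon\circ(r_p(\sigma))^{\circ-1}$, produced from the rigidity of $\sigma$ exactly as in the proof of Lemma~\ref{Lem:double-rigidity}; here $X$ is the left dual of $Y$ and $Y$ the right dual of $X$. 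Self-duality is then immediate: forming the product $qp$ of these two dualities gives
\[
qp=\langle XY,\,XY,\ X\bar\eta Y\circ\eta,\ \bar\epsilon\circ X\epsilon Y\rangle,
\]
a duality whose two objects both equal $XY$, so that $XY$ is simultaneously its own left and right dual. This supplies the self-dual structure demanded by the statement and, at the same time, the duality with respect to which rigidity of $\rho_\tau$ will be tested.

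For invertibility I read $\rho_\tau$ as the composite $XYXY\to XXYY\to XXYY\to XYXY$ given successively by $Xr_p(\tau^{\circ-1})Y$, then $\tau\,r_p(r_p(\tau))$, then $X(r_p(\tau))^{\circ-1}Y$. The two outer factors are isomorphisms as soon as $r_p(\tau^{\circ-1})$ and $r_p(\tau)$ are, which holds by the definition of a rigid $T$-matrix (these are $X$-right isomorphisms), and $\tau$ is invertible by hypothesis. The only factor whose invertibility is not visible at once is the iterated transpose $r_p(r_p(\tau))$; this is exactly the kind of statement packaged in Lemma~\ref{Lem:double-rigidity}, and follows by combining the $X$-left and $Y$-right isomorphism properties of $(r_p(\tau))^{\circ-1}$ recorded there, the latter taken with respect to $q$. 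Hence $\rho_\tau$ is a composite of isomorphisms.

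The Yang--Baxter equation
\[
\rho_\tau(XY)\circ(XY)\rho_\tau\circ\rho_\tau(XY)=(XY)\rho_\tau\circ\rho_\tau(XY)\circ(XY)\rho_\tau
\]
on $(XY)^3=XYXYXY$ is the heart of the matter and the step I expect to be the main obstacle. My plan is to substitute the definition of $\rho_\tau$ into each of the three slots and then straighten the resulting diagram, pushing the transposes $r_p(\tau^{\circ\pm1})$ and $r_p(r_p(\tau))$ past one another so that all factors on each side come to act on a common normal form on $X^3Y^3=XXXYYY$. In this normal form the transpositions carry implicit $\sigma$-braidings, which are commuted across the $\tau$-factors by the naturality relation~\eqref{E:naturality}, while the involutivity $\sigma=\sigma^{\circ-1}$ and the Yang--Baxter property of $\sigma$ dispose of the pure braiding rearrangements; the essential cancellation that equates the two sides is then furnished by the pentagon relation~\eqref{E:pentagon}. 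In effect one is verifying that the doubling $\tau\mapsto\rho_\tau$ converts a pentagon operator into a Yang--Baxter operator, with~\eqref{E:naturality} guaranteeing at every stage the compatibility of $\tau$ with the chosen dualities. I anticipate this bookkeeping to be the longest and most delicate part.

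Finally, rigidity of $(C,XY,\rho_\tau)$ means that $\rho_\tau^{\circ\pm1}$ are $XY$-left isomorphisms relative to the self-duality $qp$, i.e.\ that $l_{qp}(\rho_\tau^{\circ\pm1})$ are invertible. By Remark~\ref{rem:tay-tau-inverse-symmetry} the case of $\rho_\tau^{\circ-1}$ is obtained from that of $\rho_\tau$ by exchanging $\tau$ with $\tau^{\circ-1}$, so it suffices to treat $\rho_\tau$. Expanding $l_{qp}(\rho_\tau)$ through the formulae for $a_{qp}$, $b_{qp}$ and the product duality, the result factors as a composite built out of exactly the morphisms $(r_p(\tau^{\circ\pm1}))^{\circ-1}$, which Lemma~\ref{Lem:double-rigidity} asserts to be $X$-left and $Y$-right isomorphisms; each such factor is therefore invertible, whence so is $l_{qp}(\rho_\tau)$. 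This is precisely the conclusion that Lemma~\ref{Lem:double-rigidity} was designed to deliver, so rigidity follows and the proof is complete.
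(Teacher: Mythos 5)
Your outline agrees with the paper's proof in two places: the self-duality of $XY$ via the product duality $qp$, with $q=\langle X,Y,(r_p(\sigma))^{\circ-1}\circ\eta,\epsilon\circ(r_p(\sigma))^{\circ-1}\rangle$, is exactly the paper's construction, and invoking Lemma~\ref{Lem:double-rigidity} for rigidity is also the paper's mechanism (the paper tests $r_{qp}(\rho_\tau^{\circ\pm1})$ where you test $l_{qp}$; by Lemma~\ref{Lem:iff} this is immaterial). The genuine gap is the Yang--Baxter step, which is the heart of the theorem: you give a plan and explicitly defer the ``bookkeeping'', and the plan as described cannot be executed literally. In a bare monoidal category there is no way to reorder tensor factors, so there is no ``normal form on $X^3Y^3$''; every rearrangement must be effected by the rotated braidings $\hat\sigma=r_p(\sigma)$, $\check\sigma=\hat\sigma^{\circ-1}$, $\sigma'=r_p(r_p(\sigma))$, and the relations \eqref{E:naturality} and \eqref{E:pentagon} you want to invoke hold a priori only for morphisms of $X^2$. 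What must actually be established are their rotated analogues: the paper reduces the Yang--Baxter equation for $\rho_\tau$ to eight pentagon relations mixing $\tau$, $\tau'=r_p(r_p(\tau))$, $\hat\tau=r_p(\tau^{\circ-1})$, $\check\tau=(r_p(\tau))^{\circ-1}$ with $\sigma$, $\sigma'$, $\hat\sigma$, $\check\sigma$ (for instance $\tau Y\circ X\hat\sigma\circ\hat\tau X=X\hat\tau\circ\hat\tau X\circ Y\tau$), and then notes that all eight are equivalent to \eqref{E:pentagon}, since the rotation bijections $r_p$, $l_q$ transport identities to identities. Producing such a list and checking that the three-fold composite collapses to it is the mathematical content, and it is absent from your proposal.

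A second, smaller problem: both of your appeals to Lemma~\ref{Lem:double-rigidity} misstate what it provides. For the middle factor of $\rho_\tau$ you need $\tau'=r_p(r_p(\tau))$ invertible, but the lemma says nothing about the double rotation $r_p\circ r_p$; it asserts invertibility of the rotations $l_q(\check\tau)$ and $r_q(\check\tau)$ (and of the corresponding rotations of $\hat\tau^{\circ-1}$), which are different morphisms. The fact you need is more elementary and requires no rigidity at all: $r_p(r_p(-))$ coincides with transposition with respect to the product duality $pp$, which is contravariantly functorial, so $\tau'$ is invertible with $(\tau')^{\circ-1}=r_p(r_p(\tau^{\circ-1}))$. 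This identity is also what underlies the paper's one-line treatment of invertibility, namely $\rho_\tau^{\circ-1}=\rho_{\tau^{\circ-1}}$, combined with the symmetry of Remark~\ref{rem:tay-tau-inverse-symmetry}. Likewise, in the rigidity step, $l_{qp}(\rho_\tau)$ does not factor ``into exactly the morphisms $(r_p(\tau^{\circ\pm1}))^{\circ-1}$'' --- those are invertible by definition, so such a factorization would render the lemma pointless. What the expansion actually yields (in the paper's $r_{qp}$ form) is
\[
r_{qp}(\rho_\tau)=Xl_q(\tau)Y\circ r_q(\check\tau)(\tau')^{\circ-1}\circ Xr_p(\tau)Y,
\]
whose outer factors are invertible by rigidity of the $T$-matrix, and whose middle factor is invertible precisely because Lemma~\ref{Lem:double-rigidity} makes $\check\tau$ a $Y$-right isomorphism (and $\hat\tau^{\circ-1}$, for the case of $\rho_\tau^{\circ-1}$). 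Since you never write this factorization down, that step also remains unverified.
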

\begin{proof}
For the inverse of $\rho_\tau$, we have the formula
\[
\rho_\tau^{\circ-1}=\rho_{\tau^{\circ-1}},
\]
while the Yang--Baxter equation
\[
\rho_\tau XY\circ XY\rho_\tau\circ \rho_\tau XY=XY\rho_\tau\circ \rho_\tau XY\circ XY\rho_\tau
\]
is a consequence of eight Pentagon relations
\[
\tau X\circ X\sigma\circ\tau X=X\tau\circ\tau X\circ X\tau,\quad
\tau' Y\circ Y\tau'\circ\tau' Y=Y\tau'\circ\sigma' Y\circ Y\tau',
\]
\[
\tau Y\circ X\hat\sigma\circ\hat\tau X=X\hat\tau\circ\hat\tau X\circ Y\tau,\quad
\tau' X\circ Y\check\tau\circ\check\tau Y=Y\check\tau\circ\check\sigma Y\circ X\tau',
\]
\[
 \hat\tau X\circ Y\tau\circ \check\tau X=X\check\tau\circ\sigma Y\circ X\hat\tau,\quad
 \check\tau Y\circ X\sigma'\circ \hat\tau Y=Y\hat\tau\circ\tau' X\circ Y\check\tau,
\]
\[
 \check\tau X\circ X\check\sigma\circ \tau Y=Y\tau\circ\check\tau X\circ X\check\tau,\quad
 \hat\tau Y\circ Y\hat\tau\circ \tau' X=X\tau'\circ\hat\sigma Y\circ Y\hat\tau
\]
where we use the notation
\[
\hat\tau=r_p(\tau^{\circ-1}),\quad\check\tau=(r_p(\tau))^{\circ-1},\quad\tau'=r_p(r_p(\tau)),
\]
\[
\hat\sigma=r_p(\sigma),\quad\check\sigma=\hat\sigma^{\circ-1},\quad\sigma'=r_p(r_p(\sigma)).
 \]
In fact, all of these Pentagon relations are equivalent to each other.
If we define another duality $q=\langle X,Y,\check\sigma\circ\eta,\epsilon\circ\check\sigma\rangle$, then
the object $XY$ is selfdual as it enters the duality
\[
qp=\langle XY,XY,X(\check\sigma\circ \eta)Y\circ\eta,\epsilon\circ\check\sigma\circ X\epsilon Y\rangle.
\]
Rigidity of $\rho_\tau$ is equivalent to invertibility of the morphisms $r_{qp}(\rho_\tau^{\circ\pm1})$. We have explicitly
\[
r_{qp}(\rho_\tau)=Xl_q(\tau)Y\circ r_q(\check\tau)(\tau')^{\circ-1}\circ Xr_p(\tau)Y,
\]
\[r_{qp}(\rho_\tau^{\circ-1})=Xl_q(\tau^{\circ-1})Y\circ r_q(\hat\tau^{\circ-1})\tau'\circ X\hat\tau Y
\]
which are invertible by Lemma~\ref{Lem:double-rigidity}.
\end{proof}
\subsection{$T$-matrices from Hopf objects}
Recall that a \emph{symmetric monoidal category} is a monoidal category $C$ with a natural isomorphism $s\colon\otimes\to \otimes^{\mathrm{op}}$ satisfying the equations
\[
s_{X,YZ}=Ys_{X,Z}\circ s_{X,Y}Z,\quad s_{XY,Z}=s_{X,Z}Y\circ Xs_{Y,Z},\quad s_{Y,X}s_{X,Y}=XY.
\]
\begin{definition}
A \emph{Hopf object} in a symmetric monoidal category $C$ is a six-tuple $(X,\nabla,\Delta,\eta,\epsilon,\gamma)$ consisting of an object $X$ of $C$ and five morphisms:  the \emph{product} $\nabla\colon X^2\to X$, the \emph{co-product} $\Delta\colon X\to X^2$, the \emph{unit} $\eta\colon\mathbb{I}\to X$, the \emph{co-unit} $\epsilon\colon X\to \mathbb{I}$, and the \emph{antipode} $\gamma\colon X\to X$ such that
\[
\nabla\circ\nabla X=\nabla\circ X\nabla ,\quad \nabla\circ X\eta=\nabla\circ\eta X=X,
\]
i.e. the triple $(X,\nabla, \eta)$ is a monoid in $C$,
\[
X\Delta\circ\Delta=\Delta X\circ\Delta,\quad X\epsilon\circ\Delta=\epsilon X\circ\Delta=X,
\]
i.e. the triple $(X,\Delta,\epsilon)$ is a co-monoid in $C$,
\[
\epsilon\circ\nabla=\epsilon^2,\quad \Delta\circ\eta=\eta^2,\quad
\epsilon\circ\eta=\mathbb{I},
\]
\[
\nabla\circ X\gamma\circ\Delta=\nabla\circ \gamma X\circ\Delta=\eta\circ\epsilon,
\]
and
\[
\Delta\circ\nabla=\nabla^2\circ Xs_{X,X}X\circ\Delta^2.
\]
\end{definition}
\begin{proposition}
Let $(X,\nabla,\Delta,\eta,\epsilon,\gamma)$ be a Hopf object in a symmetric monoidal category $C$.
Then, the quadruple
\begin{equation}\label{E:hopf-t-matrix}
(C,X,\sigma,\tau),\quad
\sigma=s_{X,X},\quad  \tau=\sigma\circ X\nabla\circ\Delta X
\end{equation} is a $T$-matrix. Moreover, if $X$ is a left or right dual, and the antipode is invertible, then this $T$-matrix is rigid.
\end{proposition}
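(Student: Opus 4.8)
The plan is to separate the two assertions. \emph{That $(C,X,\sigma,\tau)$ is a $T$-matrix.} Since $\sigma=s_{X,X}$, the involutivity $\sigma=\sigma^{\circ-1}$ is the instance $Y=X$ of the symmetry axiom $s_{Y,X}s_{X,Y}=XY$, and the Yang--Baxter equation for $\sigma$ is the braid identity that any symmetric braiding satisfies (a standard consequence of the hexagon identities together with naturality of $s$). The naturality relation \eqref{E:naturality} I would read off directly as naturality of $s$ with respect to $\tau$: the hexagon gives $s_{X,X^2}=X\sigma\circ\sigma X$, and the naturality square $s_{X,X^2}\circ X\tau=\tau X\circ s_{X,X^2}$ is precisely \eqref{E:naturality}.

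That $\tau$ is an isomorphism uses only the antipode: $\sigma$ is invertible and $X\nabla\circ\Delta X$ has two-sided inverse $X\nabla\circ X\gamma X\circ\Delta X$ (by coassociativity together with the antipode and counit axioms), whence $\tau^{\circ-1}=(X\nabla\circ X\gamma X\circ\Delta X)\circ\sigma$. The heart of the first half is the pentagon equation \eqref{E:pentagon}, which I would verify with string diagrams, or equivalently in Sweedler notation, by reducing both sides evaluated on $a\otimes b\otimes c$ to the common expression
\[
\sum a_{(3)}b_{(2)}c\otimes a_{(2)}b_{(1)}\otimes a_{(1)}.
\]
The only ingredients are coassociativity of $\Delta$, associativity of $\nabla$, and the bialgebra compatibility $\Delta\circ\nabla=\nabla^2\circ Xs_{X,X}X\circ\Delta^2$; notably the antipode plays no role here.

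\emph{Rigidity.} Because $C$ is symmetric, a one-sided dual is automatically two-sided, so the hypothesis that $X$ is a left or right dual already makes $X$ s-rigid, and I may fix a duality $p=\langle X,Y,\eta,\epsilon\rangle$. For the $R$-matrix part I would compute $l_p(\sigma)\colon XY\to YX$ directly: writing $\eta=\sum_i y_i\otimes x_i$ and denoting $\epsilon$ by the pairing $\langle\cdot,\cdot\rangle$, the two zig-zag identities of the duality collapse $l_p(\sigma)$ to the transposition $x\otimes y\mapsto y\otimes x$, i.e. $l_p(\sigma)=s_{X,Y}$, which is invertible; hence $\sigma^{\circ\pm1}=\sigma$ are $X$-left isomorphisms and $(C,X,\sigma)$ is a rigid $R$-matrix.

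The main obstacle is to show that $\tau^{\circ\pm1}$ are $X$-left isomorphisms. I would compute $l_p(\tau^{\circ\pm1})\colon XY\to YX$ explicitly; for instance
\[
l_p(\tau)(x\otimes y)=\sum_i\langle (x_i)_{(1)},y\rangle\,y_i\otimes (x_i)_{(2)}x,
\]
and then exhibit a two-sided inverse. This is exactly where the invertibility of the antipode enters: to undo the right multiplication produced by $X\nabla$ after the partial transposition one must carry a tensor factor across the duality against the coproduct, which forces the use of $\gamma^{\circ-1}$. This dovetails with the hypotheses, since the bare antipode already suffices for $\tau$ to be invertible (the $T$-matrix statement), whereas the invertibility of $\gamma$ is precisely what is needed to invert the transposed morphism and hence to obtain rigidity. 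Once $l_p(\tau^{\circ\pm1})$ are shown invertible, Lemmas~\ref{Lem:iff} and \ref{Rem:left-right-rigid} upgrade them to $X$-right isomorphisms as well, so the $T$-matrix is rigid; the case in which $X$ is a right dual is symmetric, e.g. via Remark~\ref{rem:tay-tau-inverse-symmetry}.
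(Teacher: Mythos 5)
Your first half is essentially correct and amounts to the verification the paper dismisses as ``straightforward'': involutivity and the Yang--Baxter equation for $\sigma=s_{X,X}$ follow from the symmetry axioms, equation \eqref{E:naturality} is indeed the naturality square of $s_{X,X^2}=X\sigma\circ\sigma X$ against $\tau$, the antipode axioms give $\tau^{\circ-1}=(X\nabla\circ X\gamma X\circ\Delta X)\circ\sigma$, and both sides of \eqref{E:pentagon} do reduce to $\sum a_{(3)}b_{(2)}c\otimes a_{(2)}b_{(1)}\otimes a_{(1)}$ using only associativity, coassociativity and the bialgebra axiom. The identity $l_p(\sigma)=s_{X,Y}$ is also correct, so $(C,X,\sigma)$ is a rigid $R$-matrix. (Two caveats: basis-style expressions such as $\eta=\sum_i y_i\otimes x_i$ do not exist in a general symmetric monoidal category, so these computations should officially be run as string diagrams or with $\eta,\epsilon$ kept as morphisms; and your closing appeal to Remark~\ref{rem:tay-tau-inverse-symmetry} is misplaced, since that remark concerns the substitution $\tau\mapsto\tau^{\circ-1}$ and not left versus right duals --- the left/right issue is already settled by your earlier observation that one-sided duals are two-sided in a symmetric category.)

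The genuine gap is in the rigidity of $\tau$. The whole mathematical content of that claim is the existence of inverses of the partial transposes $l_p(\tau^{\circ\pm1})$, and your argument stops exactly there: ``and then exhibit a two-sided inverse'' is the assertion to be proved, not a proof step, and the sentence about carrying a tensor factor across the duality is a heuristic rather than a verification. You also treat only one sign (``for instance'' covers $\tau$ but not $\tau^{\circ-1}$), and the two signs are genuinely asymmetric. The paper's proof consists precisely of the two explicit identities you never write down: with $q=\langle X,Y,s_{X,Y}\circ\eta,\epsilon\circ s_{X,Y}\rangle$ and $\tau_k=X\nabla\circ X\gamma^{\circ k}X\circ\Delta X$, it asserts
\[
(r_p(\tau))^{\circ-1}=l_q(\tau_{-1}\circ\sigma),\qquad
(r_p(\tau^{\circ-1}))^{\circ-1}=l_q(\sigma\circ\tau_2),
\]
so the inverse antipode enters for $\tau$, whereas the inverse for $\tau^{\circ-1}$ uses $\gamma^{\circ2}$ together with the anti-homomorphism property of $\gamma$ --- a distinction your ``forces the use of $\gamma^{\circ-1}$'' does not capture. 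In your own notation the missing formula for the $+$ sign is
\[
(l_p(\tau))^{\circ-1}(h\otimes z)=\sum_i\langle (x_i)_{(1)},h\rangle\;\gamma^{\circ-1}\bigl((x_i)_{(2)}\bigr)z\otimes y_i,
\]
and checking both composites is a short computation using coassociativity, the zig-zag identities of the duality, and the defining identities $u_{(2)}\gamma^{\circ-1}(u_{(1)})=\epsilon(u)1=\gamma^{\circ-1}(u_{(2)})u_{(1)}$ of the inverse antipode; an analogous formula with $\gamma^{\circ2}$ inverts $l_p(\tau^{\circ-1})$. Until such formulas (or an equivalent invertibility argument) are supplied, the rigidity half of the proposition remains unproved.
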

\begin{proof}
Verification of the equations~\eqref{E:naturality} and \eqref{E:pentagon} is straightforward. Assume that $X$ is a left or right dual, and the antipode is invertible.
Then, it is easily seen that the involutive $R$-matrix $(C,X,\sigma)$ is rigid so that $X$ is s-rigid.
Fixing a duality $p=\langle Y,X,\eta,\epsilon\rangle$,  we also have
 another duality
 \[
 q=\langle X,Y,s_{X,Y}\circ\eta,\epsilon\circ s_{X,Y}\rangle.
 \]
  Now,
$\tau^{\circ\pm1}$ are $X$-right isomorphisms due to the identities
\[
(r_p(\tau))^{\circ-1}=l_q(\tau_{-1}\circ\sigma),\quad
(r_p(\tau^{\circ-1}))^{\circ-1}=l_q(\sigma\circ\tau_2 )
\]
where
\[
\tau_k=X\nabla\circ X\gamma^{\circ k}X\circ\Delta X,\quad k\in\mathbb{Z}.
\]
\end{proof}
\section{Knot invariants and $\Delta$-complexes}\label{Sec:3}
The results of the previous sections permit us to lift certain knot invariants, associated to finite dimensional Hopf algebras with invertible antipode, into the combinatorial setting of ideal triangulations of knot complements. Such a lifting is based on the geometrical interpretation of the $T$-matrices in terms of tetrahedra.
\subsection{$\Delta$-complexes}
Let
\[
\Delta^n=\{(t_0,t_1,\ldots,t_n)\in [0,1]^{n+1}\vert\ t_0+t_1+\cdots+t_n=1\},\quad n\ge 0,
\]
be the standard $n$-simplex with the face inclusion maps
\[
\delta_m\colon \Delta^{n}\to\Delta^{n+1},\quad 0\le m\le n+1,
\] defined by the formula
\[
\delta_m(t_0,\ldots,t_{n})=(t_0,\ldots, t_{m-1},0,t_{m},\ldots,t_n).
\]
A (simplicial) cell in a topological space $X$ is a continuous map
\[
f\colon \Delta^n\to X
\]
 such that the restriction of $f$ to the interior of $\Delta^n$ is an embedding. On the set of cells $\Sigma(X)$ we have the dimension function
 \[
 d\colon\Sigma(X)\to\mathbb{Z},\quad (f\colon \Delta^n\to X) \mapsto n.
 \]
Following \cite{Hatcher}, we define a $\Delta$-complex structure on a topological space $X$ as a pair $(\Delta(X), \partial)$, where $\Delta(X)\subset \Sigma(X)$ and $\partial$ is a set of maps
\[
\partial=\left\{\left.\partial_n\colon d\vert_{\Delta(X)}^{-1}(\mathbb{Z}_{\ge \max(1,n)})\to d\vert_{\Delta(X)}^{-1}(\mathbb{Z}_{\ge {n-1}})\ \right\vert\ n\ge0 \right\}
\]
such that:
\begin{itemize}
\item[(i)] each point of $X$ is in the image of exactly one restriction, $\alpha\vert_{\operatorname{int}(\Delta^{n})}$ for $\alpha\in \Delta(X)$;

\item[(ii)] $\alpha\circ\delta_m=\partial_m(\alpha)$;

\item[(iii)]a set $A\subset X$ is open iff $\alpha^{-1}(A)$ is open for each $\alpha\in \Delta(X)$.
\end{itemize}Notice that any $\Delta$-complex is a CW-complex.
We denote by $\Delta^n(X)=\Delta(X)\cap d^{-1}(n)$ the set of $n$-dimensional cells of a $\Delta$-complex $X$.
\begin{definition}
A \emph{combinatorial $\Delta$-complex} is a triple $(S,d,\partial)$ consisting of a set $S$, a map
$d\colon S\to\mathbb{Z}_{\ge0}$, and a family of maps
\[
\partial=\{\partial_n\colon d^{-1}(\mathbb{Z}_{\ge\operatorname{max}(1,n)})\to
d^{-1}(\mathbb{Z}_{\ge n-1})\vert\ n\in\mathbb{Z}_{\ge0}\}
\]
such that
\[
\partial_i(d^{-1}(n))\subset d^{-1}(n-1),\quad 0\le i\le n,\quad n\ge1,
\]
and
\[
\partial_i\circ\partial_{j+1}=\partial_j\circ\partial_i,\quad i\le j\,.
\]
A \emph{morphism} between two $\Delta$-complexes $(S,d,\partial)$ and $(S',d',\partial')$ is a map of sets $f\colon S\to S'$ such that $d'\circ f=d$ and $f\circ\partial_i=\partial_i'\circ f$, $i\ge 0$.
\end{definition}
For any $\Delta$-complex $X$, the triple $(\Delta(X),d\vert_{\Delta(X)},\partial)$ is a combinatorial $\Delta$-complex, and it is clear that any combinatorial $\Delta$-complex $(S,d,\partial)$ can be realized this way. Indeed, we define a topological space $X$ as the quotient space $\tilde X/\mathcal{R}$ where
\[
\tilde X=\bigsqcup_{n\ge 0} \Delta^n\times d^{-1}(n),
\]
and $\mathcal{R}$ is the equivalence relation generated by the relations
\[
(\delta_i(x),\alpha)\stackrel{\mathcal{R}}{\sim} (x,\partial_i(\alpha)),\quad 0\le i\le n,\quad x\in\Delta^{n-1},\quad \alpha\in d^{-1}(n),\quad n\ge1.
\]
Any element $\alpha\in d^{-1}(n)\subset S$ corresponds to an $n$-cell
\[
f_\alpha\colon \Delta^n\to X,\quad f_\alpha(x)=[x,\alpha],
\]
and, defining $\partial_i(f_\alpha)=f_{\partial_i(\alpha)}$, we verify the property (ii):
\[
f_\alpha\circ\delta_i(x)=[\delta_i(x),\alpha]=[x,\partial_i(\alpha)]=f_{\partial_i(\alpha)}(x)=
\partial_i(f_\alpha)(x).
\]
The same space can be constructed as a CW-complex by noting that
\[
\cup_{i=0}^n\delta_i(\Delta^{n-1})=\partial(\Delta^n),\quad n\ge1.
\]
We define the $0$-skeleton $X^0$ to be the discrete space $d^{-1}(0)$, and for $n\ge1$, we define recursively the $n$-skeleton $X^n$ as the space obtained by attaching the space $\Delta^n\times d^{-1}(n)$ on the $n-1$-skeleton $X^{n-1}$ along the boundary
\[
\partial(\Delta^n\times d^{-1}(n))=\partial(\Delta^n)\times d^{-1}(n)
\]
via the map
\[
\phi_n\colon \partial(\Delta^n\times d^{-1}(n))\to X^{n-1}, \quad \phi_n(\delta_i(x),\alpha)=\Phi_{n-1}(x,\partial_i(\alpha)),
\]
where $x\in \Delta^{n-1}$, $0\le i\le n$, and
\[
\Phi_{n-1}\colon \Delta^{n-1}\times d^{-1}(n-1)\to X^{n-1}
\]
is the characteristic map of the $n-1$-cells.

\subsection{Presentations of finite dimensional $\Delta$-complexes}
A combinatorial $\Delta$-complex $(S,d,\partial)$ is called \emph{finite dimensional} if there exists an integer $n\ge 0$ such that $d^{-1}(k)=\emptyset$ for all $k>n$. In this case, the \emph{dimension} of the complex is defined as the unique integer $\operatorname{dim}(S,d,\partial)$ such that $d^{-1}(\operatorname{dim}(S,d,\partial))\ne \emptyset$, and
$d^{-1}(k)=\emptyset$ for all $k>\operatorname{dim}(S,d,\partial)$.
\begin{proposition}\label{Prop:pres}
Let $A$, $B$ be two sets and $\{ f_i\colon A\to B\vert \ 0\le i\le n\}$ a set of $n+1$ maps from $A$ to $B$. Then, there exists a unique, up to a unique isomorphism of $\Delta$-complexes, $n$-dimensional combinatorial $\Delta$-complex $(S,d,\partial)$ such that
\begin{equation}\label{E:pres}
d^{-1}(n)=A,\quad d^{-1}(n-1)=B,\quad \partial_i\vert_{d^{-1}(n)}=f_i,\ 0\le i\le n,
\end{equation}
and it is universal in the sense that for any other  $n$-dimensional combinatorial $\Delta$-complex $(S',d',\partial')$, having the above properties, there exists a unique morphism
\[
f\colon (S,d,\partial)\to(S',d',\partial')
\]
 which
is identity on $d^{-1}\{n,n-1\}$.
\end{proposition}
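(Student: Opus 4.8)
The plan is to build $(S,d,\partial)$ as the combinatorial $\Delta$-complex freely generated by the datum, obtained by attaching a standard simplex to each cell of $A$ and $B$ and gluing along the prescribed faces; the universal property will then be an instance of the universal property of that gluing. It is useful to observe first that a combinatorial $\Delta$-complex is exactly a semi-simplicial set: writing $[m]=\{0,\dots,m\}$, the sets $d^{-1}(k)$ together with the maps $\partial_i$ and the identities $\partial_i\circ\partial_{j+1}=\partial_j\circ\partial_i$ ($i\le j$) make $S$ a presheaf on the category of finite ordinals and order-preserving injections. For each $m$ let $\Delta[m]$ denote the representable such complex, whose $k$-cells are the order-preserving injections $[k]\hookrightarrow[m]$ (equivalently the $(k+1)$-element subsets of $[m]$), with $\partial_i$ deleting the $i$-th entry; its $i$-th facet is the image of the coface $\delta_i\colon\Delta[m-1]\to\Delta[m]$. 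Conceptually, the complex we seek is the left Kan extension of the two-term datum along the inclusion of the top two dimensions into all dimensions $\le n$, but I would construct it by hand.

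Concretely, I would set
\[
S=\Bigl(\coprod_{a\in A}\Delta[n]\ \sqcup\ \coprod_{b\in B}\Delta[n-1]\Bigr)\big/\!\sim\,,
\]
where $\sim$ is the smallest dimension- and face-compatible equivalence relation that identifies, for every $a\in A$ and every $0\le i\le n$, the $i$-th facet of the copy $\Delta[n]_a$ with the top cell of the copy $\Delta[n-1]_{f_i(a)}$ via the canonical $\delta_i$. Since the gluings only touch cells of dimension $\le n-1$, the top cells stay in bijection with $A$, so $d^{-1}(n)=A$; nothing forces distinct elements of $B$ to merge, so the codimension-one cells are precisely the top cells of the $\Delta[n-1]_b$, giving $d^{-1}(n-1)=B$; and by construction $\partial_i(a)$ is the top cell of $\Delta[n-1]_{f_i(a)}$, i.e.\ $\partial_i\vert_A=f_i$, which is exactly \eqref{E:pres}. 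The maps $\partial_i$ descend to the quotient, and the semi-simplicial identities, holding in each $\Delta[m]$, are preserved, so $S$ is a genuine $n$-dimensional combinatorial $\Delta$-complex.

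For universality, let $(S',d',\partial')$ be any $n$-dimensional complex satisfying \eqref{E:pres}. Each $a\in A$ names a unique $n$-cell of $S'$, hence a morphism $\Delta[n]\to S'$, and each $b\in B$ names a morphism $\Delta[n-1]\to S'$; the relation $\partial'_i(a)=f_i(a)$ says precisely that these morphisms agree on the glued facets, so the universal property of the quotient yields a unique morphism $f\colon S\to S'$ that is the identity on $d^{-1}\{n,n-1\}$. Uniqueness also follows directly, since every cell of $S$ is an iterated face $\partial_{i_1}\circ\cdots\circ\partial_{i_r}$ of an element of $A\cup B=d^{-1}\{n,n-1\}$ and a morphism commutes with the $\partial_i$, so $f$ is forced on all lower cells. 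Applying this initiality to two complexes with data \eqref{E:pres} gives mutually inverse morphisms fixing the top two levels, whence the asserted uniqueness of $(S,d,\partial)$ up to a unique isomorphism.

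The step I expect to be most delicate is the verification underlying the quotient: one must check that the relation $\sim$ generated by the facet identifications is exactly the one forced by the simplicial identities—neither coarser nor finer—so that $S$ carries a well-defined face structure and so that the assignment defining $f$ is well defined on $\sim$-classes. This amounts to controlling the combinatorics of iterated faces, i.e.\ tracking how the relations $\partial_i\partial_{j+1}=\partial_j\partial_i$ propagate down from the $n$-cells through every lower dimension; it is the content of the fact that colimits of semi-simplicial sets exist and are computed dimensionwise, and it is routine but must be carried out with care.
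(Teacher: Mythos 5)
Your proof is correct, but it follows a genuinely different route from the paper's. The paper works entirely by hand, with a top-down recursion: $d^{-1}(n)=A$ and $d^{-1}(n-1)=B$ are simply declared (so \eqref{E:pres} holds by fiat, no quotient being taken in the top two dimensions), and for $m<n-1$ the $m$-cells are defined as formal faces of the $(m+1)$-cells, namely $d^{-1}(m)=\bigl(d^{-1}(m+1)\times\{0,\dots,m+1\}\bigr)/\mathcal{R}_m$, where $\mathcal{R}_m$ is generated by the relations $(\partial_{j+1}(\alpha),i)\sim(\partial_i(\alpha),j)$ for $\alpha\in d^{-1}(m+2)$ and $0\le i\le j\le m+1$; setting $\partial_i(\alpha)=[\alpha,i]$ makes the identities $\partial_i\circ\partial_{j+1}=\partial_j\circ\partial_i$ hold by the very definition of $\mathcal{R}_m$, and the universal morphism is produced recursively by $f([\alpha,i])=\partial_i'(f(\alpha))$. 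You instead recognize combinatorial $\Delta$-complexes as semi-simplicial sets and build $S$ as a colimit of representables $\Delta[n]$, $\Delta[n-1]$, with universality coming from Yoneda plus the universal property of the colimit; the two constructions yield canonically isomorphic objects, since both are initial. The trade-off is in where the work sits: the paper's construction is elementary and self-contained, keeps \eqref{E:pres} automatic, and localizes all relations to two consecutive dimensions, the only delicate point being well-definedness of the recursively defined $f$ on $\mathcal{R}_m$-classes (which follows from the simplicial identities in $S'$); your construction gets the simplicial identities on the quotient and the existence and uniqueness of the induced morphism for free from the fact that presheaf colimits are computed dimensionwise, but must then verify --- as you rightly flag --- that the generated relation does not over-identify in dimensions $n$ and $n-1$, which your facet-by-facet argument (each facet of $\Delta[n]_a$ is glued to a single $\Delta[n-1]_{f_i(a)}$, and no chain of identifications links distinct elements of $B$) handles. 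Your observation that every cell of $S$ is an iterated face of a cell of $A\cup B$, forcing uniqueness of $f$, is exactly the mechanism behind the paper's recursive definition; just phrase the final step carefully: the ``uniqueness up to unique isomorphism'' argument needs both complexes to satisfy the universal property, not merely the conditions \eqref{E:pres}, since a complex satisfying \eqref{E:pres} alone may have extra identifications among its lower-dimensional cells.
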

\begin{proof}
We define $d^{-1}(m)$ for $m\in\{n,n-1\}$ and the maps $\partial_i\vert_{d^{-1}(n)}$ by using equations \eqref{E:pres}. Then, for $m< n-1$, we recursively define
\[
d^{-1}(m)=(d^{-1}(m+1)\times\{0,\ldots,m+1\})/\mathcal{R}_m
\]
where the equivalence relation $\mathcal{R}_m$ is generated by the relations
\[
(\partial_{j+1}(\alpha),i)\stackrel{\mathcal{R}_m}{\sim}(\partial_{i}(\alpha),j),\quad 0\le i\le j\le m+1,\quad \alpha\in d^{-1}(m+2).
\]
Denoting by $[\alpha,i]$ the $\mathcal{R}_m$-equivalence class of $(\alpha,i)$, we define
\[
\partial_i(\alpha)=[\alpha,i],\quad 0\le i\le m+1,\quad \alpha\in d^{-1}(m+1).
\]
Then, for any $\alpha\in d^{-1}(m+2)$ and $ 0\le i\le j\le m+1$, we verify that
\[
\partial_i\circ\partial_{j+1}(\alpha)=[\partial_{j+1}(\alpha),i]=[\partial_{i}(\alpha),j]=
\partial_j\circ\partial_{i}(\alpha).
\]
In this way, we come to an $n$-dimensional combinatorial $\Delta$-complex $(S,d,\partial)$, where
 $S=\sqcup_{k=0}^n d^{-1}(k)$, which has the required properties.

 Let $(S',d',\partial')$ be another $n$-dimensional combinatorial $\Delta$-complex satisfying the conditions~\eqref{E:pres}. Then, we define a map $f\colon S\to S'$ by recursion so that the restriction of $f$  on the subset $d^{-1}\{n,n-1\}$ is the identity, for any $0\le m<n-1$, we set
 \[
 f([\alpha,i])=\partial_i'(f(\alpha)),\quad \alpha\in d^{-1}(m+1),\quad 0\le i\le m+1.
 \]
 It is clear that $f$ is a morphism of combinatorial $\Delta$-complexes, and that it is a unique morphism with the required property.
\end{proof}
By using this proposition, we will call the triple
\[
\langle A,B,\{f_i\colon A\to B\}_{0\le i\le n}\rangle
\]
a \emph{presentation} for the associated universal $n$-dimensional combinatorial $\Delta$-complex.

\subsection{Combinatorial $\Delta$-complexes associated with tangle diagrams}
For an oriented tangle diagram $\Gamma$, we denote by $\Gamma_{04}$ the set of crossings, $\Gamma_{01}$ the set of boundary points, and $\Gamma_1$ the set of edges of $\Gamma$.
 We have two maps
 \[
 \operatorname{dom},\operatorname{cod}\colon \Gamma_1\to \Gamma_{01}\sqcup\Gamma_{04},
 \]
 where $\operatorname{dom}(e)$ (respectively $\operatorname{cod}(e)$) is the crossing or the boundary point where the oriented edge $e$ goes out (respectively goes in). We define another pair of maps
 \[
 \mys,\myt\colon\Gamma_1\to \{0,\pm1\}
 \]
 where $\mys(e)$ (respectively $\myt(e)$) takes the value $+1$, if the edge $e$ is over-passing at the crossing $\operatorname{dom}(e)$ (respectively  at the crossing $\operatorname{cod}(e)$), the value $-1$, if the edge $e$ is under-passing at the crossing $\operatorname{dom}(e)$ (respectively  at the crossing $\operatorname{cod}(e)$), and the value $0$, if $\operatorname{dom}(e)$ (respectively $\operatorname{cod}(e)$) is a boundary point. The function $\varepsilon\colon \Gamma_{04}\to\{\pm1\}$
takes the value $+1$ on positive crossings and $-1$ on negative crossings.

To any tangle diagram $\Gamma$, we associate a $3$-dimensional combinatorial $\Delta$-complex $D_\Gamma$  by the presentation
\[
\langle \Gamma_{04},\Gamma_{1},\{f_i\colon \Gamma_{04}\to \Gamma_{1}\}_{0\le i\le 3}\rangle
\]
where the maps $f_i$ are given by the pictures
\[
\begin{tikzpicture}[baseline=(x.south)]
 \node (x) at (.5,.5) [inner sep=.5mm,circle] {};
 \draw[->] (x.north east)--(1,1);
 \draw (x.south west)-- (x.north east);
 \draw (0,0)--(x.south west);
 \draw (1,0)--(x.south east);
 \draw[->] (x.north west)--(0,1);
 \node at (1.1,0){\tiny $0$};
 \node at (-.1,0){\tiny $2$};
 \node at (-.1,1){\tiny $1$};
 \node at (1.1,1){\tiny $3$};
 \end{tikzpicture}
 \quad \mathrm{and}\quad
 \begin{tikzpicture}[baseline=(x.south)]
 \node (x) at (.5,.5) [inner sep=.5mm,circle] {};
 \draw[->] (x.north east)--(1,1);
 \draw (x.south east)-- (x.north west);
 \draw (0,0)--(x.south west);
 \draw (1,0)--(x.south east);
 \draw[->] (x.north west)--(0,1);
 \node at (1.1,0){\tiny $3$};
 \node at (-.1,0){\tiny $1$};
 \node at (-.1,1){\tiny $2$};
 \node at (1.1,1){\tiny $0$};
 \end{tikzpicture}
\]
which mean that if, for example, $\varepsilon(v)=+1$, then the left picture tells that $f_0(v)$ is the unique edge $e$ such that $\operatorname{cod}(e)=v$ and $\myt(e)=-1$.

A $\mathbb{Z}$-\emph{coloring} of a $3$-dimensional combinatorial $\Delta$-complex $(S,d,\partial)$ is a map
\[
c\colon d^{-1}(3)\to \mathbb{Z}.
\]

\subsection{Lifting $R$-matrix knot invariants into the combinatorial framework of $\Delta$-complexes}
In the rest of the paper we restrict ourselves to the category $\cfdvs_\mathbb{K}$ of finite dimensional vector spaces over a field $\mathbb{K}$ with the symmetric monoidal structure given by the tensor product over $\mathbb{K}$, and the standard permutation of the tensor components. The unit object is identified with the base field $\mathbb{K}$. A Hopf object in this category is called \emph{finite dimensional Hopf algebra over  $\mathbb{K}$}.

Let $(X,\nabla,\Delta,\eta,\epsilon,\gamma)$ be a finite dimensional Hopf algebra over $\mathbb{K}$ with invertible antipode.
Denote $d=\operatorname{dim}(X)$, and  fix a linear basis $\{ v_i\}_{1\le i\le d}$ in $X$. Let $\{ w^i\}_{1\le i\le d}$ be the corresponding dual basis of the dual vector space $X^*$. We have the canonical dualities
\begin{equation}\label{E:can-du-1}
p=\langle X^*,X,\operatorname{coev},\operatorname{eval}\rangle,\quad \operatorname{eval}(fv)=f(v),\quad
\operatorname{coev}(1)=\sum_{i=1}^d v_iw^i,
\end{equation}
and
\begin{equation}\label{E:can-du-2}
q=\langle X,X^*,\operatorname{coev}',\operatorname{eval}'\rangle,\quad \operatorname{eval}'(vf)=f(v),\quad
\operatorname{coev}'(1)=\sum_{i=1}^d w^iv_i.
\end{equation}
With respect to the chosen basis, the structural maps of our Hopf algebra are given in terms of the corresponding structural constants $\{\nabla_{i,j}^k\}$, $\{\Delta_k^{i,j}\}$, $\{\eta^i\}$, $\{\epsilon_i\}$, $\{\gamma_i^j\}$ according to the formulae
\begin{multline}
\nabla(v_iv_j)=\sum_{k=1}^d\nabla_{i,j}^kv_k,\quad \Delta(v_i)=\sum_{i,j=1}^d\Delta_k^{i,j}v_iv_j,
\\ \eta(1)=\sum_{i=1}^d\eta^iv_i,\quad \epsilon(v_i)=\epsilon_i,\quad \gamma(v_i)=\sum_{j=1}^d\gamma_i^jv_j.
\end{multline}
The action on the basis elements of the maps
\[
\tau_r=X\nabla\circ X\gamma^{\circ r}X\circ\Delta X,\quad r\in\mathbb{Z},
\] has the form
\begin{multline*}
\tau_r v_iv_j=X\nabla\circ X\gamma^{\circ r}X\circ\Delta X(v_iv_j)=\sum_{k,l=1}^d\Delta_i^{k,l}X\nabla\circ X\gamma^{\circ r}X(v_kv_lv_j)\\=
\sum_{k,l,m=1}^d\Delta_i^{k,l}(\gamma^{\circ r})_l^mX\nabla(v_kv_mv_j)
=\sum_{k,l,m,n=1}^d\Delta_i^{k,l}(\gamma^{\circ r})_l^m\nabla_{m,j}^nv_kv_n\\
=\sum_{k,n=1}^d (\tau_r)_{i,j}^{k,n} v_kv_n,
\end{multline*}
where
\[
(\tau_r)_{i,j}^{k,n}=\sum_{l,m=1}^d\Delta_i^{k,l}(\gamma^{\circ r})_l^m\nabla_{m,j}^n\,.
\]

Let $\Gamma$ be a tangle diagram,  $D_\Gamma$, the associated $3$-dimensional combinatorial $\Delta$-complex, and $c\colon \Gamma_{04}\to \mathbb{Z}$, a $\mathbb{Z}$-coloring. For any map
\[
f\colon \Gamma_1\to \{1,\ldots,d\}
\]
we associate a weight function of $W_f(D_\Gamma,c)\in\mathbb{K}$ given by the product
\[
W_f(D_\Gamma,c)=\prod_{v\in\Gamma_{04}} W_f(v,c)
\]
where $W_f(v,c)$ is given by the number $(\tau_{c(v)})_{i,j}^{k,l}$ for the configuration
\[
\begin{tikzpicture}[baseline=(x.south)]
 \node (x) at (.5,.5) [inner sep=.5mm,circle] {};
 \draw[->] (x.north east)--(1,1);
 \draw (x.south west)-- (x.north east);
 \draw (0,0)--(x.south west);
 \draw (1,0)--(x.south east);
 \draw[->] (x.north west)--(0,1);
 \node at (1.1,0){\tiny $j$};
 \node at (-.1,0){\tiny $i$};
 \node at (-.1,1){\tiny $l$};
 \node at (1.1,1){\tiny $k$};
 \end{tikzpicture}
\]
and by the number $(\tau_{c(v)})_{i,j}^{k,l}$ for the configuration
\[
\begin{tikzpicture}[baseline=(x.south)]
 \node (x) at (.5,.5) [inner sep=.5mm,circle] {};
 \draw[->] (x.north east)--(1,1);
 \draw (x.south east)-- (x.north west);
 \draw (0,0)--(x.south west);
 \draw (1,0)--(x.south east);
 \draw[->] (x.north west)--(0,1);
 \node at (1.1,0){\tiny $i$};
 \node at (-.1,0){\tiny $j$};
 \node at (-.1,1){\tiny $k$};
 \node at (1.1,1){\tiny $l$};
 \end{tikzpicture}
\]
The results of the previous sections imply the following proposition.
\begin{proposition}
Let $(X,\nabla,\Delta,\eta,\epsilon,\gamma)$ be a finite dimensional Hopf algebra over $\mathbb{K}$ with invertible antipode, and $\rho_\tau$, the associated rigid $R$-matrix corresponding to the canonical dualities~\eqref{E:can-du-1}, \eqref{E:can-du-2}. Let $\Gamma$ be a tangle diagram, and $\tilde\Gamma$ its $2$-cable where the second component is taken with the reversed orientation. Then, there exists a representative $\Gamma'$ of the dpr-tangle defined by $\tilde \Gamma$, and a $\mathbb{Z}$-coloring $c$ of the $3$-dimensional combinatorial $\Delta$-complex $D_{\Gamma'}$ such that the matrix elements of the operator $\Phi_{\rho_\tau,k}(\Gamma)$ with respect to a chosen basis in $X$ are given by a state sum of the form
\[
\sum_{f,f\vert_{\partial\Gamma'}=g}W_f(D_{\Gamma'},c),
\]
where $g\colon\Gamma'_{01}\to\{1,\ldots,\operatorname{dim}X\}$ is a multi-index parameterizing the matrix elements of the operator $\Phi_{\rho_\tau,k}(\Gamma)$.
\end{proposition}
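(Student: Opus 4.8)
The plan is to evaluate $\Phi_{\rho_\tau,k}(\Gamma)$ via Theorem~\ref{Th:main} by decomposing $\Gamma$ into the elementary tangles of Section~\ref{Sec:1}, and then to rewrite the resulting composition of copies of $\rho_\tau^{\circ\pm1}$ and the (co)evaluation morphisms as a morphism naturally attached to the $2$-cable $\tilde\Gamma$. Since $\rho_\tau$ acts on the doubled object $XY=X\otimes X^*$, one factor carrying the original orientation ($X$) and the other the reversed one ($Y=X^*$), the functor applied to a single strand of $\Gamma$ is the same datum as a doubled evaluation of two antiparallel strands, which is exactly what the $2$-cable with reversed second component encodes. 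First I would make this correspondence explicit, matching each crossing, maximum and minimum of $\Gamma$ with the corresponding local tangle of $\tilde\Gamma$, and choosing the representative $\Gamma'$ so that every cabled crossing is drawn as one of the two standard crossing pictures used to define the maps $f_i$ of $D_{\Gamma'}$.

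Next I would expand the defining formula $\rho_\tau=X\check\tau Y\circ\tau\tau'\circ X\hat\tau Y$, with $\hat\tau=r_p(\tau^{\circ-1})$, $\tau'=r_p(r_p(\tau))$ and $\check\tau=(r_p(\tau))^{\circ-1}$, and recognize its four tensor factors $\hat\tau,\tau,\tau',\check\tau$ as the four crossings into which a single crossing of $\Gamma$ splits under $2$-cabling, hence as the four tetrahedra of $D_{\Gamma'}$ lying over that crossing. Using the identities of the preceding proposition, $(r_p(\tau))^{\circ-1}=l_q(\tau_{-1}\circ\sigma)$ and $(r_p(\tau^{\circ-1}))^{\circ-1}=l_q(\sigma\circ\tau_2)$, together with the analogous expressions for $\tau$ and $\tau'$, each of these pieces reduces—after absorbing the permutation $\sigma$ into the index placement prescribed by the two configuration pictures and by $\mys,\myt,\varepsilon$—to an operator $\tau_r$ for a definite integer $r$. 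Recording these integers as the values $c(v)$ defines the $\mathbb{Z}$-coloring $c$ of $D_{\Gamma'}$, so that the local weight at $v$ is exactly the number $(\tau_{c(v)})_{i,j}^{k,l}$ read off the four edges meeting at $v$, as in the weight function $W_f(v,c)$.

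It then remains to match the categorical contraction with the combinatorial state sum. Working in the basis $\{v_i\}$ and the dual basis $\{w^i\}$, composition of morphisms together with the dualities $\operatorname{coev},\operatorname{eval},\operatorname{coev}',\operatorname{eval}'$ amounts to summing over the label of each internal $X$- or $X^*$-line while leaving the boundary lines free; under the cabling this is precisely the sum over all $f\colon\Gamma'_1\to\{1,\dots,d\}$ restricting to the prescribed multi-index $g$ on $\Gamma'_{01}$. Here I would separate the two kinds of turnback morphism: the pure self-duality (co)evaluations of $qp$, built from $\operatorname{coev},\operatorname{eval}$ and the permutation $\sigma$, involve no $\nabla$ or $\Delta$ and hence produce no tetrahedra, contributing only index identifications, whereas the factors $\rho_\pm^{\circ-1}$ and $\omega^{\circ\pm k}$ entering $\bar\eta,\bar\epsilon$ unfold into $\tau$-operators just like the crossings and contribute the remaining tetrahedra, carrying the $k$-dependent part of $c$. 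Collecting the weights then yields $\prod_{v\in\Gamma'_{04}}W_f(v,c)=W_f(D_{\Gamma'},c)$ and the claimed formula.

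The main obstacle lies in the bookkeeping of the second and third steps: proving that every partial transpose occurring in $\rho_\tau^{\circ\pm1}$, and every factor in the $\omega^{\circ\pm k}$ corrections, collapses to a single $\tau_r$ with a well-defined integer $r$, so that the antipode powers $\gamma^{\circ r}$ introduced by transposition never obstruct a consistent $\mathbb{Z}$-coloring. This is where the two $T$-matrix relations \eqref{E:naturality} and \eqref{E:pentagon}, the algebraic shadows of the tetrahedral moves, and the invertibility of the antipode are indispensable. Once the coloring is shown to be well defined and $\Gamma'$ chosen so that each crossing is one of the two standard configurations, identifying the state sum with $\Phi_{\rho_\tau,k}(\Gamma)$ reduces to a direct index computation using the expression for $(\tau_r)_{i,j}^{k,n}$.
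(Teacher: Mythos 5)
Your proposal is correct and takes essentially the same route as the paper: the paper in fact gives no separate proof, stating only that ``the results of the previous sections imply'' the proposition, and your outline --- evaluating $\Phi_{\rho_\tau,k}(\Gamma)$ via Theorem~\ref{Th:main}, identifying the factors $X\hat\tau Y$, $\tau\tau'$, $X\check\tau Y$ of $\rho_\tau$ with the four crossings into which a cabled crossing splits, reducing each partial transpose to some $\tau_r$ by the identities of the Hopf-object proposition so that those integers define the coloring $c$, and matching basis contraction along edges with the state sum over $D_{\Gamma'}$ --- is precisely the unpacking of that implication. The turnback bookkeeping you flag (realizing the $\tau$-factors hidden in $\bar\eta$, $\bar\epsilon$ and $\omega^{\circ\pm k}$ as crossings of a suitably chosen representative $\Gamma'$, or equivalently as shifts of the colors $c(v)$ of adjacent tetrahedra) is indeed the only delicate point, and it is exactly why the statement asserts only the existence of some representative and some coloring rather than using the standard cable diagram.
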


 \end{document}